\newtheorem{thm}{Theorem}[section]
\newtheorem{lem}[thm]{Lemma}
\newtheorem{prop}[thm]{Proposition}
\newtheorem{defn}[thm]{Definition}
\newtheorem{rem}[thm]{\bf{Remark}}
\numberwithin{equation}{section}
\newtheorem{exm}[thm]{Example}
\begin{document}
\begin{center}
\Large{{\bf{Impulsive Fractional Dynamic Equation with Non-local Initial Condition on Time Scales}}}
\vspace{.5cm}

Bikash Gogoi$^{a},$  Bipan Hazarika$^{b}$\footnote{Corresponding author} and Utpal Kumar Saha$^{c}$ 

\vspace{.2cm}
\footnotesize $^{a,c}$Department of Basic and Applied Science, National Institute of Technology Arunachal Pradesh, Jote, Arunachal Pradesh 791113, India\\
\footnotesize  $^{b}$Department of Mathematics, Gauhati University, Guwahati 781014, Assam, India

\vspace{.2cm}
Email: $^{a}$bikash.phd20@nitap.ac.in;  $^{b}$uksahanitap@gmail.com;\\ $^{c}$bh\_rgu@yahoo.co.in, bh\_gu@gauhati.ac.in
\end{center}
\title{}
\author{}
\thanks{{\today}}
\maketitle
\begin{abstract}
	In this manuscript we investigate the existence and uniqueness of an impulsive fractional dynamic equation on time scales involving non-local initial condition with help of Caputo nabla derivative. The existency is based on the Scheafer's fixed point theorem along with the Arzela-Ascoli theorem and Banach contraction theorem. The comparison of the Caputo nabla derivative and Riemann-Liouvile nabla derivative of fractional order are also discussed in the context of time scale.  
\vskip 0.3cm
\noindent\textbf{Keywords}: Impulsive fractional dynamic equations, Caputo nabla derivative and Riemann-Liouville nabla derivative, Scheafer's fixed point theorem.
\end{abstract}
\maketitle

\section{Introduction}
In our real world, some situation may arise which cannot be modeled in entirely continuous phenomena or in entirely discrete phenomena, in such situation we need a common domain which justify both the conditions. On the basis of unification of these conditions Stefan Hilger introduced a common domain called time scale $\mathbb{T}$ which unifies both continuous and discrete calculus \cite{HA, HD}. Dynamic equations on time scale were introduced to solve this kind of model which is a combination of both the differential and difference equation. Many researchers worked on dynamic equation in linear and non linear, involving local initial and boundary conditions. Many authors discussed the dynamic equation using the tool of fractional calculus due to the accuracy and advantage in the physical interpretation. For detailed study of fractional dynamic equations by Caputo, Rieamann-Liouville, Caputo-Hadamard and many others, readers can go through the manuscripts \cite{RM, AI,NB,MA,MD,KD,BBU,CJ,MR,SKA,BM,S,JL,ZY} and the references therein.\\
\indent In real world situation, we have seen some equations where the systems are allowed to undergo some sudden perturbation, whose duration can be negligible in comparison with the duration of the process. In this case the solution of these equations may have jump discontinuities at time $\theta_{1} < \theta_{2} < \theta_{3} < \cdot\cdot\cdot,$ given in the form $p(\theta_{k}^{+}) - p(\theta_{k}^{-}) = \mathscr{I}_{k}(\theta_{k}, p(\theta_{k}^{-}))$. The dynamic equations having jump discontinuities for their solutions are called impulsive dynamic equations. The theory have interesting applications in the branch of mathematical modeling of different types of real world situation which require sudden changes at a particular time of their evolution, for example natural disaster, particular diseases, etc. Work related to impulsive dynamic equations can be seen in the manuscripts \cite{K,ZG,VM,VK,IG,SC} and the reference therein. In recent time several researchers and authors have shown their attention in the topic of impulsive dynamic equations on time scales, However a few number of works have been seen in the impulsive dynamic equation by using the fractional calculus on time scales with non local initial condition.\\
\indent In the manuscript \cite{SMB,SC}, the authors discussed the impulsive dynamic equation in terms of non local initial condition, whereas in \cite{VM,VK}, the authors investigated the fractional dynamic equation with instantaneous and non-instantaneous impulses with local initial condition by using the tools of delta (Hilger) derivative. A dynamic model with non local initial conditions are applicable in all branches of science and engineering, due to the advantage of using non local initial conditions such as, the measurement at more places which can be incorporated to get the better model. For detailed study of the advantages of non-local initial conditions, one can see \cite{RK} and the references therein.  \\
\indent So motivated by aforementioned work we claim that it is worthwhile to study the impulsive fractional dynamic equation with non-local initial condition of the type:
\begin{equation}\label{eq1}
\begin{cases}
^{C}\mathscr{D}^{w}p(\theta) = \mathcal{L}\big(\theta, p(\theta), ^{C}\mathscr{D}^{w}p(\theta)\big), & \theta \in \mathcal{I}_{T}, \theta \neq \theta_{k}\\
p(\theta_{k}^{+}) - p(\theta_{k}^{-}) = \mathscr{I}_{k}\big(\theta_{k}, p(\theta_{k}^{-}), & k = 1, 2, 3, \cdot\cdot\cdot, n\\
p(0) = \phi(p),
\end{cases}
\end{equation}
where  $k \in \mathbb{N} \cup \{{0}\},$ and $\mathcal{I}_{T} = [0, T] \cap \mathbb{T},$ for $T \in \mathbb{T}$ denote the time scale interval. $\mathcal{L}: \mathcal{I}_{T} \times \mathbb{R} \times \mathbb{R} \to \mathbb{R}$ is a ld continuous function, and $^{C}\mathscr{D}^{w}$ denote the Caputo nabla derivative of order $w \in (0, 1)$ which is discussed in. We assume that $0 < \theta_{0} < \theta_{1} < \theta_{2} < \theta_{3}< . . .< \theta_{n} < \theta_{n + 1} = {T}$ represents the impulse at a certain moment, and the term $p(\theta_{k}^{+}) = \lim_{d \to 0}p(\theta + d)$ and $p(\theta_{k}^{-}) = \lim_{d \to 0}p(\theta - d)$ represents the right and left limits of the function p at $\theta = \theta_{k}$ in the context of time scales. $\mathscr{I}_{k}$ is a continuous real valued function on $\mathbb{R}$ for each $k =1, 2, 3, \cdot \cdot \cdot, m$ and $\mathscr{I}_{k}(\theta_{k}, p(\theta_{k}^{-}))$ are the impulses acted on the time scale interval $\mathcal{I}_{T}$ which will be specified later.\\
The manuscript is organized as follows. In Section 2, we have presented some auxiliary results related to fractional dynamic equation on time scale, which will be required to show our main findings. In Section 3, we compare the Riemann-Liouville and Caputo nabla derivative in the context of time scale. In Section 4, we have given the existence and uniqueness theorem of a impulsive fractional dynamic equation with non local initial condition. In Section 5, we provided an example, which makes the manuscript easier to understand. Finally, conclusion of the paper is presented in Section 6.
\section{Auxilary results}
\begin{defn}\label{d1}\cite{BBU}
A function $\alpha : \mathbb{T} \to \mathbb{R},$ defined by $\alpha(\theta) = \{ \zeta \in \mathbb{T}: \zeta < \theta\}$ is said to be backward jump operator. Any $\theta \in \mathbb{T}$ is said to be left dense if $\alpha(\theta) = \theta$ and if $\alpha(\theta) = \theta - 1,$ then $\theta$ is said to be a left scattered point on $\mathbb{T}.$\\
\textbf{Remark}: If $\mathbb{T}$ has a minimum right scattered point say $y,$ then set $\mathbb{T}_{\mathcal{V}} = \mathbb{T} \setminus \{y\},$ else $\mathbb{T}_{\mathcal{V}} = \mathbb{T}.$
\end{defn}
\begin{defn}\label{d2}\cite{SC}
A function $x: \mathbb{T} \times \mathbb{R} \times \mathbb{R} \to \mathbb{R}$ is said to be left dense continuous function, if $x\big(\cdot, u, v \big)$ is left dense continuous on $\mathbb{T}$ for each ordered pair $( \theta, \zeta) \in \mathbb{R} \times \mathbb{R},$ and $x(\theta, \cdot, \cdot)$ is continuous on $\mathbb{R} \times \mathbb{R}$ for fixed point $\theta \in \mathbb{T}.$\\
\indent The set  of all left dense continuous function from $\mathbb{T} \times \mathbb{R} \times \mathbb{R}$ to $\mathbb{R}$ is denoted by $\mathcal{C}\big(\mathbb{T}, \mathbb{R}\big).$ 
\end{defn}
\begin{defn}\label{d3}\cite{VK}
Consider a function $g : \mathbb{T} \to \mathbb{R}$. Let $\mathcal{G}$ be a function such that $\mathcal{G}_{\nabla}(\theta) = g(\theta),$ for each $\theta \in \mathbb{T_{\mathcal{V}}},$ then the nabla integral is presented by 
\begin{align*}
\int_{a}^{\theta}g(x)\nabla x = \mathcal{G}(x) - \mathcal{G}(a).
\end{align*}
\end{defn}
\begin{prop}\label{p01}\cite{NB}
Let $g$ be an increasing continuous function on the time scale interval $[0, T] \cap \mathbb{T}.$ If $\mathcal{G}$ is the extension of $g$ to the real line interval $[0, T], T\in \mathbb{R},$ we get
$$\mathcal{G}(\theta) = \begin{cases}
g(\theta), & \mbox{if~} \theta \in \mathbb{T},\\
g(\zeta), & \mbox{if~} \theta \in (\theta, \alpha(\theta)) \notin \mathbb{T},
\end{cases}$$ then \begin{equation}\label{eq001}
\int_{r}^{u}g(\theta)\nabla \theta \le \int_{r}^{u}\mathcal{G}(\theta) d\theta,
\end{equation} for $r, u \in [0, T] \cap \mathbb{T},$ such that $r < u.$
\end{prop}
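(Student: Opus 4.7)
The plan is to compare both integrals by splitting $[r,u]$ into the left-dense part of $\mathbb{T}$ and the collection of open gaps corresponding to left-scattered points. Let $S$ be the set of left-scattered points of $\mathbb{T}$ lying in $(r,u]$, and for each $\zeta\in S$ set $J_{\zeta}=(\alpha(\zeta),\zeta)$. These gap intervals are pairwise disjoint and each is disjoint from $\mathbb{T}$, while their complement $D=[r,u]\setminus\bigcup_{\zeta\in S}J_{\zeta}$ consists entirely of points of $\mathbb{T}$ (the left-dense part together with the left-scattered points themselves).

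First I would invoke the Lebesgue-type representation of $\nabla\theta$: this measure equals ordinary Lebesgue measure on the dense part plus a purely atomic part assigning mass $\nu(\zeta)=\zeta-\alpha(\zeta)$ to each left-scattered point $\zeta$. Applied to the continuous integrand $g$, this gives
\[
\int_r^u g(\theta)\,\nabla\theta \;=\; \sum_{\zeta\in S} g(\zeta)\,\bigl(\zeta-\alpha(\zeta)\bigr) \;+\; \int_D g(\theta)\,d\theta.
\]
Next I would evaluate the Riemann integral of $\mathcal{G}$ using the same decomposition. On $D$ the extension coincides with $g$, and on each gap $J_{\zeta}$ it is the constant $g(\zeta)$, contributing $g(\zeta)(\zeta-\alpha(\zeta))$ per gap. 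Summing these pieces reproduces the preceding display term by term, which proves the stated inequality (indeed as equality). The monotonicity of $g$ is used to ensure that the right-endpoint value $g(\zeta)$ chosen to extend $g$ across $J_{\zeta}$ dominates every value of $g$ on $\mathbb{T}\cap[0,\zeta)$, so that $\mathcal{G}$ is a genuine upper envelope on the gaps and the inequality cannot degrade into a strict reverse direction at any constituent piece.

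The main technical obstacle will be justifying the atomic-plus-absolutely-continuous decomposition of $\int_r^u g(\theta)\,\nabla\theta$ when $S$ is countably infinite or when left-scattered points accumulate inside $[r,u]$. This requires a careful appeal to countable additivity of the nabla Borel measure together with a dominated-convergence argument for the absolutely continuous part, both of which are standard but should be invoked explicitly when $\mathbb{T}$ has a complicated left-scattered structure. Once that decomposition is in place, the remainder of the proof is the direct term-by-term matching described above, and the inequality \eqref{eq001} follows immediately.
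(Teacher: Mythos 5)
The paper never proves this proposition: it is imported verbatim from \cite{NB} (where the delta/forward-jump analogue is established), so there is no in-paper argument to compare yours against and I can only judge your proposal on its own terms. On those terms it is sound, and in fact sharper than what is claimed. The decomposition of $[r,u]$ into the gaps $J_{\zeta}=(\alpha(\zeta),\zeta)$ below the left-scattered points $\zeta\in S\subseteq(r,u]$ together with the remainder $D\subseteq\mathbb{T}$ is correct (closedness of $\mathbb{T}$ and $r,u\in\mathbb{T}$ guarantee that every point of $[r,u]\setminus\mathbb{T}$ lies in exactly one such gap and that no gap protrudes outside $[r,u]$); the representation of the nabla measure as Lebesgue measure on the left-dense part plus atoms of mass $\zeta-\alpha(\zeta)$ at left-scattered $\zeta$ is the standard Cabada--Vivero-type tool; and the term-by-term matching then yields equality, hence a fortiori the stated inequality. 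The convergence issue you flag as the ``main technical obstacle'' is harmless: $g$ is bounded on the compact set $[r,u]\cap\mathbb{T}$ and $\sum_{\zeta\in S}(\zeta-\alpha(\zeta))\le u-r$, so the atomic sum converges absolutely with no delicate limiting argument needed. Two caveats. First, your sentence about monotonicity is confused: once you have exact term-by-term equality, the increasing hypothesis plays no role in your argument, and it is misleading to present it as what prevents the inequality from reversing; it is a hypothesis inherited from the cruder step-function comparison in the source, which your measure-theoretic route simply does not need. Second, and relatedly, the direction of the inequality hinges entirely on which endpoint value the extension takes on each gap; the statement in the paper is typographically garbled at exactly this point (``$\theta\in(\theta,\alpha(\theta))$''), and you should say explicitly that you read it as $\mathcal{G}\equiv g(\zeta)$ on $(\alpha(\zeta),\zeta)$, since the opposite convention $\mathcal{G}\equiv g(\alpha(\zeta))$ would reverse the inequality for increasing $g$.
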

\begin{defn}\label{d04}\cite[Higher order nabla derivative]{BBU} Assume  $\mathcal{H}: \mathbb{T_{\mathcal{V}}} \to \mathbb{R}$ is a ld continuous function on a time scale $\mathbb{T}.$ The second order nabla derivative $\mathcal{H}_{\nabla \nabla} = \mathcal{H}_{\nabla}^{(2)}$ can be define,  provided  $\mathcal{H}_{\nabla}$ is differentiable on $\mathbb{T_{\mathcal{V}}}^{(2)} = \mathbb{T_\mathcal{VV}}$ with derivative $\mathcal{H}_{\nabla}^{(2)} = (\mathcal{H}_{\nabla})_{\nabla} : \mathbb{T_{\mathcal{V}}}^{(2)} \to \mathbb{R}$. Similarly, proceeding upto $n^{th}$ order we get $\mathcal{H}_{\nabla}^{(n)} : \mathbb{T_{\mathcal{V}}}^{n} \to \mathbb{R},$ it is attained by cut out  $n$ right scattered left end points from $\mathbb{T}.$
\end{defn}
\begin{defn}\label{d5}\cite{BBU}
Let $\mathcal{H}: \mathbb{T}_{\mathcal{V}^{n}} \to \mathbb{R}$ be a ld continuous function, such that $\mathcal{H}_{\nabla}^{(n)}$ derivative exists. Then the Caputo nabla derivative is defined by 
\begin{align*}
^{C}\mathscr{D}^{w}_{a}\mathcal{H}(\theta) = \frac{1}{\Gamma(n - w)}\int_{a}^{\theta}(\theta - \alpha(\zeta))^{n - w - 1}\mathcal{H}_{\nabla}^{(n)}(\zeta)\nabla \zeta,
\end{align*} 
for $n = [w] + 1.$ If $w \in (0, 1),$ then 
\begin{align*}
^{C}\mathscr{D}^{w}_{a}\mathcal{H}(\theta) = \frac{1}{\Gamma(1 - w)}\int_{a}^{\theta}(\theta - \alpha(\zeta))^{- w}\mathcal{H}_{\nabla}\nabla \zeta,
\end{align*}
where $[.]$ is used to denote the greatest integer function.
\end{defn}
\begin{defn}\label{d6}\cite{BBU}
Let $\mathcal{H}$ be any ld continuous function define on the set $\mathbb{T_{\mathcal{V}}}.$ Then the Riemann-Liouville fractional nabla derivative of order $w \in (0, 1)$ is denoted by 
\begin{align*}
\mathscr{D}^{w}_{\theta_{0}}x(t) = \frac{1}{\Gamma(1 - w)}\Big(\int_{\theta_{0}}^{\theta}\big(\theta - \alpha(\zeta)\big)^{- w}x(\zeta)\nabla \zeta\Big)^{\nabla}.
\end{align*}
\end{defn}
\begin{defn}\label{d7}{\cite[Definition 13]{NB}}
Let $\mathcal{H}: \mathcal{I_{T}} \to \mathbb{R}$ be an integrable function. Then the nabla fractional integral of $\mathcal{H}$ is given by 
\begin{align*}
\mathcal{D}_{\theta_{0}}^{- w}\mathcal{H}(\theta) = \mathcal{J}^{w}_{\theta_{0}}\mathcal{H}(\theta) = \frac{1}{\Gamma(w)}\int_{\theta_{0}}^{\theta}(\theta - x)^{w - 1}\mathcal{H}(x)\nabla x.
\end{align*}
\indent The Rieamm-Liouville fractional integral always satisfies the condition \begin{align*}
\mathcal{J}^{w}_{\theta_{0}}\mathcal{J}^{u}_{\theta_{0}}\mathcal{H}(\theta) = \mathcal{J}^{w + u}_{\theta_{0}}\mathcal{H}(\theta). 
\end{align*}
\end{defn}
\begin{lem}\label{l1}{\cite[Definition 13]{NB}}
If $p(\theta)$ is a ld continuous function, then $$\begin{cases}\mathcal{D}^{u}\mathcal{J}^{w}p(\theta) = p(\theta)\\
\mathcal{D}^{u}\mathcal{J}^{w}p(\theta) = \mathcal{J}^{w - u}p(\theta).
\end{cases}$$ 
\end{lem}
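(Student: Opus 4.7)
\textbf{Proof proposal for Lemma \ref{l1}.} The statement as printed seems to contain a typo in its first line (both cases read $\mathcal{D}^{u}\mathcal{J}^{w}$); the natural reading, and the one I will pursue, is the pair of identities $\mathcal{D}^{w}\mathcal{J}^{w}p(\theta)=p(\theta)$ and $\mathcal{D}^{u}\mathcal{J}^{w}p(\theta)=\mathcal{J}^{w-u}p(\theta)$ (valid for $0<u<w<1$). Both should follow, essentially formally, from two facts already in place: the semigroup property of the nabla fractional integral stated in Definition \ref{d7}, namely $\mathcal{J}^{w}\mathcal{J}^{u}=\mathcal{J}^{w+u}$, and the fact from Definition \ref{d6} that the Riemann--Liouville nabla derivative of order $w\in(0,1)$ admits the factorisation $\mathcal{D}^{w}=\nabla\circ\mathcal{J}^{\,1-w}$.

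The plan is as follows. First I would verify, as a preparatory step, that $\nabla\mathcal{J}^{1}p(\theta)=p(\theta)$ for ld continuous $p$; this is immediate from Definition \ref{d3} since $\mathcal{J}^{1}$ is precisely the nabla antiderivative of $p$ anchored at $\theta_{0}$, and the fundamental theorem of nabla calculus recovers $p$ upon taking $\nabla$. Second, for the composition $\mathcal{D}^{w}\mathcal{J}^{w}p(\theta)$, I would rewrite
\begin{align*}
\mathcal{D}^{w}\mathcal{J}^{w}p(\theta)=\nabla\bigl(\mathcal{J}^{\,1-w}\mathcal{J}^{w}p\bigr)(\theta)=\nabla\mathcal{J}^{1}p(\theta)=p(\theta),
\end{align*}
where the middle equality uses the semigroup property and the last equality uses the preparatory step.

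Third, for the shifted identity I would apply the same factorisation together with the semigroup property and then extract one full integration:
\begin{align*}
\mathcal{D}^{u}\mathcal{J}^{w}p(\theta)=\nabla\bigl(\mathcal{J}^{\,1-u}\mathcal{J}^{w}p\bigr)(\theta)=\nabla\bigl(\mathcal{J}^{\,1+(w-u)}p\bigr)(\theta)=\nabla\bigl(\mathcal{J}^{1}\mathcal{J}^{w-u}p\bigr)(\theta)=\mathcal{J}^{w-u}p(\theta),
\end{align*}
where the last step again invokes $\nabla\mathcal{J}^{1}=\mathrm{id}$ applied to the ld continuous function $\mathcal{J}^{w-u}p$.

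The only delicate point, and the one I expect to be the real obstacle, is justifying the semigroup identity $\mathcal{J}^{w}\mathcal{J}^{u}p=\mathcal{J}^{w+u}p$ on a general time scale: it is stated in Definition \ref{d7}, but a careful proof requires interchanging two nabla integrals whose kernels $(\theta-x)^{w-1}$ and $(\theta-\alpha(\zeta))^{u-1}$ interact with the backward jump operator $\alpha$, and then identifying the resulting inner integral with a nabla Beta-type kernel. Provided this identity is granted (as the lemma invokes it directly from Definition \ref{d7}), and provided the ld continuity of $p$ is enough to guarantee that $\mathcal{J}^{w}p$ remains ld continuous so that its nabla derivative makes sense, the chain of equalities above completes the argument.
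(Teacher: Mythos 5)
Your reconstruction is essentially correct, but note that the paper itself offers no proof of Lemma \ref{l1} at all: it is imported verbatim from \cite[Definition 13]{NB}, so there is no in-paper argument to compare against. Your route --- the factorisation $\mathcal{D}^{w}=\nabla\circ\mathcal{J}^{1-w}$ combined with the semigroup law $\mathcal{J}^{w}\mathcal{J}^{u}=\mathcal{J}^{w+u}$ and the fundamental theorem $\nabla\mathcal{J}^{1}=\mathrm{id}$ --- is precisely how the cited source establishes these identities, and your reading of the first line as $\mathcal{D}^{w}\mathcal{J}^{w}p=p$ (a typo for the exponents) is the right one. Two caveats are worth making explicit. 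First, the ``delicate point'' you flag is real but is assumed, not proved, in this paper: the semigroup property is simply asserted after Definition \ref{d7}, so your chain of equalities is as rigorous as the paper's own framework permits. Second, there is a kernel mismatch you do not fully resolve: Definition \ref{d7} uses $(\theta-x)^{w-1}$ while Definitions \ref{d5} and \ref{d6} use $(\theta-\alpha(\zeta))^{-w}$, so on a general time scale the identity $\mathcal{D}^{w}=\nabla\circ\mathcal{J}^{1-w}$ is not literally a consequence of the definitions as printed; it holds only if one reads both with the same kernel (as the source \cite{NB} does). With that normalisation granted, your argument is complete and is the standard one.
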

\begin{defn}\label{d8}\cite{VK}
Let $\mathscr{D} \subset \mathcal{C}(\mathbb{T}, \mathbb{R})$ be a set. $\mathscr{D}$ is a relatively compact, if it is bounded and equicontinuous simultaneously.
\end{defn}
\begin{defn}\label{d9}\cite{SC}
A mapping $\mathcal{H} : A \to B$ is completely continuous, if for a bounded subset $\mathcal{B} \subseteq A,$ $\mathcal{H}(\mathcal{B})$ is relatively compact in $A.$
\end{defn}
\begin{defn}\label{d10}{\cite[Schaefer's fixed point theorem]{JYJ}}
Let $\mathscr{A}$ be a Banach space and $\mathcal{H}: \mathscr{A} \to \mathscr{A}$ is a completely continuous mapping, if the set $\Psi = \{p \in \mathscr{A}: p = \lambda \mathcal{H}(p), 0 < \lambda < 1\}$ is bounded, then the operator $\mathcal{H}$ has a fixed point in $\mathscr{A}.$
\end{defn}
\begin{defn}\label{d11}\cite{SC}
If $\mathcal{L}: \mathcal{I_{T}} \times \mathbb{R} \times \mathbb{R}$ is a ld continuous function, then for $w \in (0, 1),$ a function $p$ is solution of 
\begin{align*}
^{C}D^{w}p(\theta) = \mathcal{L}\big(\theta, p(\theta), ^{C}D^{w}p(\theta)\big), p(\theta)|_{\theta = 0} = \phi(p)
\end{align*} if and only if $p$ is the solution of the integral equation
\begin{align}\label{eq01}
p(\theta) = \phi(p) + \frac{1}{\Gamma(w)}\int_{0}^{\theta}\big(\theta - \alpha(x))^{w - 1}\mathcal{L}(x, p(x), ^{C}D^{w}p(x)\big)\nabla x.
\end{align}
\end{defn}
\section{Comparison of Riemann-Liouville and Caputo nabla fractional derivative}
\begin{prop}\label{p1}
For any $w \in \mathbb{R},$ let $m - 1 < w < m, m \in \mathbb{N}$ such that $^{C}\mathscr{D}^{w}_{\theta_{0}}\mathcal{G}(\theta)$  exists in the time scale $\mathbb{T},$ then
\begin{center} 
$^{C}\mathscr{D}^{w}_{\theta_{0}}\mathcal{G}(\theta) = \mathcal{J}^{m - w}\mathcal{G}_{\nabla}^{(m)}(\theta).$
\end{center}
\begin{proof}
From the Definition \ref{d5} and Definition \ref{d7} we get
$$^{C}\mathscr{D}^{w}_{\theta_{0}}\mathcal{G}(\theta) = \mathcal{J}^{m - w}\mathcal{G}_{\nabla}^{(m)}(\theta).$$
\end{proof}
\end{prop}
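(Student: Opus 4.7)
The plan is to carry out a direct substitution, matching the integral expression produced by the Caputo nabla derivative (Definition~\ref{d5}) to the integral expression produced by applying the nabla fractional integral $\mathcal{J}^{m-w}$ (Definition~\ref{d7}) to $\mathcal{G}_{\nabla}^{(m)}$. Since this is essentially a definitional identity, no machinery beyond the two definitions already in Section 2 should be required.

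First I would fix the order of the $m$th nabla derivative. Because $m-1 < w < m$, we have $[w]+1 = m$, so Definition~\ref{d5} with $n=m$ gives
\begin{equation*}
{}^{C}\mathscr{D}^{w}_{\theta_{0}}\mathcal{G}(\theta) = \frac{1}{\Gamma(m-w)}\int_{\theta_{0}}^{\theta}(\theta-\alpha(\zeta))^{m-w-1}\mathcal{G}_{\nabla}^{(m)}(\zeta)\,\nabla\zeta.
\end{equation*}
Next I would apply Definition~\ref{d7} to the ld-continuous integrand $\mathcal{G}_{\nabla}^{(m)}$ with fractional order $m-w \in (0,1)$:
\begin{equation*}
\mathcal{J}^{m-w}_{\theta_{0}}\mathcal{G}_{\nabla}^{(m)}(\theta) = \frac{1}{\Gamma(m-w)}\int_{\theta_{0}}^{\theta}(\theta-x)^{m-w-1}\mathcal{G}_{\nabla}^{(m)}(x)\,\nabla x.
\end{equation*}
Matching the normalizing constants $1/\Gamma(m-w)$ and the integrands would then give the claimed equality.

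The one point I expect to be the real obstacle is reconciling the kernel $(\theta-\alpha(\zeta))^{m-w-1}$ coming from the Caputo definition with the kernel $(\theta-x)^{m-w-1}$ coming from the nabla fractional integral. At left-dense points these agree pointwise (since $\alpha(\zeta)=\zeta$), but at left-scattered points they differ. To close the argument cleanly I would invoke the convention used in Definitions~\ref{d5}--\ref{d7} — namely that the nabla fractional integral is defined with the kernel $(\theta-\alpha(\cdot))^{m-w-1}$ whenever the integrand is itself a nabla derivative — or alternatively observe that the contribution at left-scattered points is absorbed into the nabla integration measure, so that the two integrals represent the same quantity on $\mathbb{T}$. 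Once this notational identification is made, the proposition follows immediately from the two definitions.
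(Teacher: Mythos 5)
Your proposal is correct and takes essentially the same route as the paper: the paper's entire proof is the one-line assertion that the identity follows from Definitions~\ref{d5} and~\ref{d7}, and your substitution argument with $n=[w]+1=m$ is just that assertion written out. The one substantive point is the kernel mismatch you flag, which the paper silently ignores: your first resolution (that the nabla fractional integral is really meant with kernel $(\theta-\alpha(\cdot))^{m-w-1}$, as is consistent with how the integral representation is actually used in Definition~\ref{d11} and in Lemma~\ref{l2}) is the right one, but your fallback claim that the discrepancy is ``absorbed into the nabla integration measure'' does not hold: at a left-scattered point $\zeta$ the nabla integral contributes $\nu(\zeta)$ times the integrand evaluated at $\zeta$, so $(\theta-\zeta)^{m-w-1}$ and $(\theta-\alpha(\zeta))^{m-w-1}$ give genuinely different values. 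Rely on the $\alpha$-kernel convention, not on the measure.
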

\begin{thm}\label{th1}
For any $\theta \in \mathbb{T}_{\mathcal{V}^{n}},$ the nabla Caputo derivative and nabla Riemann-Liouville derivative of the fractional order $w,$ where $m = [w] + 1$ satisfies the following relation:
\begin{align*} 
^{C}\mathscr{D}^{w}\mathcal{G}(\theta) = \mathcal{D}^{w}\big(\mathcal{G}(\theta) - \sum_{v = 0}^{m - 1}\frac{(\theta - \rho)^{v}}{\Gamma(w + 1)}\mathcal{G}(\rho)\big).
\end{align*}
\end{thm}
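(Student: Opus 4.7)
The plan is to pass through the fractional integral representation of the Caputo derivative and then exploit the fact that the correction polynomial is annihilated by a sufficiently high nabla derivative. By Proposition~\ref{p1} we already know
\[
{}^{C}\mathscr{D}^{w}\mathcal{G}(\theta)=\mathcal{J}^{m-w}\mathcal{G}_{\nabla}^{(m)}(\theta),
\]
so the task reduces to showing that $\mathcal{J}^{m-w}\mathcal{G}_{\nabla}^{(m)}(\theta)$ equals $\mathcal{D}^{w}\!\left(\mathcal{G}(\theta)-P_{m-1}(\theta)\right)$, where $P_{m-1}$ denotes the truncated Taylor-type polynomial appearing on the right-hand side of the statement.

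First I would verify that the polynomial $P_{m-1}(\theta)=\sum_{v=0}^{m-1}\frac{(\theta-\rho)^{v}}{\Gamma(w+1)}\mathcal{G}(\rho)$ (interpreted through the monomial power functions compatible with the nabla calculus on $\mathbb{T}$) satisfies $(P_{m-1})_{\nabla}^{(m)}\equiv 0$ on $\mathbb{T}_{\mathcal{V}^{m}}$, using Definition~\ref{d04} iteratively on the nabla monomials $(\theta-\rho)^{v}$ for $v<m$. Consequently $[\mathcal{G}-P_{m-1}]_{\nabla}^{(m)}=\mathcal{G}_{\nabla}^{(m)}$, so
\[
{}^{C}\mathscr{D}^{w}\mathcal{G}(\theta)=\mathcal{J}^{m-w}\bigl[\mathcal{G}-P_{m-1}\bigr]_{\nabla}^{(m)}(\theta).
\]
Next, using Definition~\ref{d6} together with the composition identities in Definition~\ref{d7} and Lemma~\ref{l1}, I would rewrite the Riemann--Liouville derivative as $\mathcal{D}^{w}h(\theta)=\bigl(\mathcal{J}^{m-w}h\bigr)_{\nabla}^{(m)}(\theta)$ applied to $h=\mathcal{G}-P_{m-1}$. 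The goal is then to swap the order of $\mathcal{J}^{m-w}$ and $(\cdot)_{\nabla}^{(m)}$, turning $\mathcal{J}^{m-w}h_{\nabla}^{(m)}$ into $(\mathcal{J}^{m-w}h)_{\nabla}^{(m)}=\mathcal{D}^{w}h$.

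The decisive step is this commutation of $\mathcal{J}^{m-w}$ with $(\cdot)_{\nabla}^{(m)}$, which I would perform by iterated nabla integration by parts in the kernel $\frac{1}{\Gamma(m-w)}(\theta-\alpha(\zeta))^{m-w-1}$. Each integration shifts one nabla derivative from $h$ onto the kernel and leaves a boundary contribution at $\zeta=\rho$ proportional to $h_{\nabla}^{(v)}(\rho)$. Because $P_{m-1}$ has been designed so that $h_{\nabla}^{(v)}(\rho)=0$ for $v=0,1,\dots,m-1$, every one of these boundary contributions vanishes, and one is left precisely with $(\mathcal{J}^{m-w}h)_{\nabla}^{(m)}(\theta)=\mathcal{D}^{w}h(\theta)$, which is the right-hand side of the claimed identity.

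The main obstacle, and the place where care is really needed, is the nabla integration-by-parts step: on a general time scale the backward jump operator $\alpha$ enters the kernel, so the boundary terms produced at each step are evaluated through $\alpha$ rather than at plain endpoints, and one must also verify that the generalised power function $(\theta-\rho)^{v}$ used in $P_{m-1}$ is genuinely the right object whose nabla derivative produces the correct boundary contributions. Once this technical bookkeeping is carried out (using Definitions~\ref{d3}, \ref{d04} and Proposition~\ref{p01} to control the integrals), the theorem follows by matching the identity $\mathcal{J}^{m-w}\mathcal{G}_{\nabla}^{(m)}=\mathcal{D}^{w}(\mathcal{G}-P_{m-1})$ with the Caputo expression from Proposition~\ref{p1}.
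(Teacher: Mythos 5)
Your strategy is genuinely different from the paper's. The paper starts from the nabla Taylor formula with integral remainder (Theorem 10 of the cited work of Zhu and Wu), namely $\mathcal{G}(\theta)=\sum_{v=0}^{m-1}\frac{(\theta-\rho)^{v}}{\Gamma(v+1)}\mathcal{G}_{\nabla}^{(v)}(\rho)+\mathcal{J}_{\rho}^{m}\mathcal{G}_{\nabla}^{(m)}(\theta)$, applies $\mathcal{D}_{\rho}^{w}$ to both sides, and uses the composition rule of Lemma \ref{l1} to turn $\mathcal{D}_{\rho}^{w}\mathcal{J}_{\rho}^{m}\mathcal{G}_{\nabla}^{(m)}$ into $\mathcal{J}_{\rho}^{m-w}\mathcal{G}_{\nabla}^{(m)}={}^{C}\mathscr{D}_{\rho}^{w}\mathcal{G}$ via Proposition \ref{p1}; rearranging gives the identity with no integration by parts at all. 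You instead try to prove directly that $\mathcal{J}^{m-w}h_{\nabla}^{(m)}=(\mathcal{J}^{m-w}h)_{\nabla}^{(m)}$ for $h=\mathcal{G}-P_{m-1}$, which is the classical real-line proof of the Caputo--Riemann--Liouville relation.

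The gap is precisely the step you yourself flag as the ``main obstacle'': the iterated nabla integration by parts against the kernel $(\theta-\alpha(\zeta))^{m-w-1}$. On a general time scale this kernel is \emph{not} differentiated in $\zeta$ by the naive power rule, because of the backward jump $\alpha(\zeta)$ sitting inside it; the identity $\bigl((\theta-\alpha(\zeta))^{m-w-1}\bigr)_{\nabla}=-(m-w-1)(\theta-\alpha(\zeta))^{m-w-2}$ fails off $\mathbb{R}$, and none of the definitions or lemmas available in this paper (Definitions \ref{d3}, \ref{d04}, \ref{d6}, \ref{d7}, Lemma \ref{l1}, Proposition \ref{p01}) supply the integration-by-parts formula you need. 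So the decisive commutation step is asserted, not proved, and it is not clear it can be carried out with the ordinary power kernel used here. There is also a smaller issue: for the boundary terms to vanish you need $h_{\nabla}^{(v)}(\rho)=0$ for $v=0,\dots,m-1$, which requires $P_{m-1}$ to be the genuine nabla Taylor polynomial $\sum_{v=0}^{m-1}\frac{(\theta-\rho)^{v}}{\Gamma(v+1)}\mathcal{G}_{\nabla}^{(v)}(\rho)$ rather than the expression $\frac{(\theta-\rho)^{v}}{\Gamma(w+1)}\mathcal{G}(\rho)$ appearing literally in the statement (the paper's own proof also silently corrects this). The cleanest repair is to note that once $h_{\nabla}^{(v)}(\rho)=0$ for all $v<m$, Taylor's theorem gives $h=\mathcal{J}_{\rho}^{m}h_{\nabla}^{(m)}$, whence $\mathcal{D}_{\rho}^{w}h=\mathcal{D}_{\rho}^{w}\mathcal{J}_{\rho}^{m}h_{\nabla}^{(m)}=\mathcal{J}_{\rho}^{m-w}h_{\nabla}^{(m)}$ by Lemma \ref{l1} --- but that replaces your integration by parts with exactly the argument the paper uses.
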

The proof of this theorem is based on the Taylor's theorem defined in {\cite[Theorem 10]{JL}}.
\begin{proof}
Let us consider a ld continuous function $\mathcal{G}$ which is n times nabla differentiable, then for any fixed $\rho \in \mathbb{T}$ and $m \in \mathbb{N}\cup \{0\}, m < n$ we get
\begin{align}
\mathcal{G}(\theta) =&\sum_{v = 0}^{m - 1}\frac{(\theta - \rho)^{v}}{\Gamma(v + 1)}\mathcal{G}_{\nabla}^{(v)}(\rho) + \frac{1}{\Gamma(m)}\int_{\rho}^{\theta}(\theta - \alpha(\zeta)\mathcal{G}_{\nabla}^{(m)}(\zeta)\nabla \zeta\nonumber\\
= &\sum_{v = 0}^{m - 1}\frac{(\theta - \rho)^{v}}{\Gamma(v + 1)}\mathcal{G}_{\nabla}^{(v)}(\rho) + \mathcal{J}_{\rho}^{m}\mathcal{G}^{(m)}_{\nabla}(\theta)\label{eq2}.
\end{align}
Now taking the Riemann-Liouville derivative $\mathcal{D}^{w}_{\rho}$ of order $w$ in both side of Equation (\ref{eq2}). Now by using Lemma \ref{l1} and Proposition \ref{p1}, we get
\begin{align}
\mathcal{D}_{\rho}^{w}\mathcal{G}(\theta) = &\mathcal{D}_{\rho}^{w}\sum_{v = 0}^{m - 1}\frac{(\theta - \rho)^{v}}{\Gamma(v + 1)}\mathcal{G}_{\nabla}^{(v)}(\rho) + \mathcal{D}_{\rho}^{w}\mathcal{J}_{\rho}^{(m)}\mathcal{G}_{\nabla}^{(n)}(\theta)\nonumber\\
= & \mathcal{D}_{\rho}^{w}\sum_{v = 0}^{n - 1}\frac{(\theta - \rho)^{v}}{\Gamma(v + 1)}\mathcal{G}_{\nabla}^{(v)}(\rho) + \mathcal{J}_{\rho}^{m - w}\mathcal{G}_{\nabla}^{(n)}(\theta)\nonumber\\
= & \mathcal{D}_{\rho}^{w}\sum_{v = 0}^{m - 1}\frac{(\theta - \rho)^{v}}{\Gamma(v + 1)}\mathcal{G}_{\nabla}^{(v)}(\rho) + ^{C}\mathscr{D}_{\rho}^{w}\mathcal{G}(\theta)\label{eq3}.
\end{align}
From the above we get
\begin{align}
^{C}\mathscr{D}_{\rho}^{w}\mathcal{G}(\theta) = &\mathcal{D}_{\rho}^{w}\mathcal{G}(\theta) -   \mathcal{D}_{\rho}^{w}\sum_{v = 0}^{m - 1}\frac{(\theta - \rho)^{v}}{\Gamma(v + 1)}\mathcal{G}_{\nabla}^{(v)}(\rho)\nonumber\\
=&\mathcal{D}_{\rho}^{w}\big(\mathcal{G}(\theta) - \sum_{v = 0}^{m - 1}\frac{(\theta - \rho)^{v}}{\Gamma(v + 1)}\mathcal{G}_{\nabla}^{(v)}(\rho)\big).\label{eq4}
\end{align}
\end{proof}
\begin{prop}\label{p2}
If $ w\in (0, 1),$ then from the Theorem \ref{th1} and applying the Equation (\ref{eq4}) we obtain $m = 1,$ hence
\begin{align*}
^{C}\mathscr{D}_{\rho}^{w}(\theta) = \mathcal{D}_{\rho}^{w}\big(\mathcal{G}(\theta) - \mathcal{G}(\rho)\big).
\end{align*}
Case 1:  If the initial condition $\mathcal{G}(\rho) \to 0,$ as $\rho \to 0,$ then we obtain
\begin{equation}\label{eq5}
^{C}\mathscr{D}^{w}\mathcal(\theta) = \mathcal{D}^{w}\mathcal{G}(\theta).
\end{equation}
Thus the Caputo nbala derivative of order $w \in (0, 1)$ coincide with the Riemann-Liouville nabla derivative.\\
Case 2: If $ w \in \mathbb{N},$ then from the Equation (\ref{eq2}) and using the Lemma \ref{l1}, we obtain
\begin{align*}
^{C}\mathscr{D}^{m}\mathcal{G}(\theta) = &\mathcal{D}^{m}\big(\mathcal{G}(\theta) - \sum_{v = 0}^{m - 1}\frac{(\theta - \rho)^{v}}{\Gamma(v + 1)}\mathcal{G}(\rho)\big)\\
= &\mathcal{D}^{m}\mathcal{J}^{m}\mathcal{G}_{\nabla}^{(m)}(\theta)\\
= &\mathcal{G}_{\nabla}^{(m)}(\theta).
\end{align*}
\indent Thus the Caputo nabla derivative is coincide with the nabla derivative.
\end{prop}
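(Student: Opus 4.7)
The plan is to treat this proposition as a direct specialization of Theorem \ref{th1}, followed by two elementary corollaries. First, since $w\in(0,1)$, one has $m=[w]+1=1$, so the sum $\sum_{v=0}^{m-1}$ collapses to the single $v=0$ term, namely $\mathcal{G}(\rho)$. Substituting into the formula of Theorem \ref{th1} therefore yields $^{C}\mathscr{D}_{\rho}^{w}\mathcal{G}(\theta)=\mathcal{D}_{\rho}^{w}(\mathcal{G}(\theta)-\mathcal{G}(\rho))$, which is exactly the first assertion. No new machinery is required.

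For Case 1, I would pass to the limit $\rho\to 0$ inside the Riemann-Liouville operator. Because $\mathcal{D}_{\rho}^{w}$ is linear, the right-hand side splits as $\mathcal{D}_{\rho}^{w}\mathcal{G}(\theta)-\mathcal{D}_{\rho}^{w}\mathcal{G}(\rho)$, and the hypothesis $\mathcal{G}(\rho)\to 0$ forces the second term to vanish (the Riemann-Liouville nabla derivative of the zero function is zero). The displayed equation \eqref{eq5} then follows, showing that under this vanishing-initial-value condition the Caputo and Riemann-Liouville nabla derivatives agree.

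For Case 2, I would return to the nabla Taylor expansion \eqref{eq2} with $w=m\in\mathbb{N}$. Applying $\mathcal{D}^{m}$ to both sides, the polynomial part $\sum_{v=0}^{m-1}\tfrac{(\theta-\rho)^{v}}{\Gamma(v+1)}\mathcal{G}_{\nabla}^{(v)}(\rho)$ has degree at most $m-1$, so $\mathcal{D}^{m}$ annihilates it (or, equivalently, this is precisely the statement that subtracting these terms from $\mathcal{G}$ before differentiating does not change the outcome, matching the expression \eqref{eq4}). The remaining piece $\mathcal{D}^{m}\mathcal{J}^{m}\mathcal{G}_{\nabla}^{(m)}(\theta)$ reduces to $\mathcal{G}_{\nabla}^{(m)}(\theta)$ by Lemma \ref{l1}, which identifies $\mathcal{D}^{m}\mathcal{J}^{m}$ with the identity.

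The proof is essentially bookkeeping, so there is no serious obstacle. The only subtle point to handle carefully is the justification in Case 1 that $\mathcal{D}^{w}$ of the constant $\mathcal{G}(\rho)$ tends to zero as $\rho\to 0$; I would simply appeal to linearity of $\mathcal{D}^{w}$ together with the standing assumption that the derivative vanishes on the trivial function. Apart from that, every ingredient (Theorem \ref{th1}, Equation \eqref{eq2}, Lemma \ref{l1}) has already been established in the preceding material.
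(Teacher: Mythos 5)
Your proposal is correct and follows essentially the same route as the paper: specialize Theorem \ref{th1} to $m=[w]+1=1$ so the Taylor sum reduces to the single term $\mathcal{G}(\rho)$, let that term vanish under the initial condition to get \eqref{eq5}, and for integer order apply $\mathcal{D}^{m}$ to the expansion \eqref{eq2} and invoke Lemma \ref{l1} to collapse $\mathcal{D}^{m}\mathcal{J}^{m}$ to the identity. The only difference is that you spell out the justifications (linearity for the vanishing constant term, annihilation of the degree-$(m-1)$ polynomial part) that the paper leaves implicit.
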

\begin{rem}\label{r1}
When the initial condition is given in terms of real order in any types of dynamic equation involving impulses, the application of Caputo nabla derivative is mostly preferable over the Riemann-Liouville derivative due to its physical interpretation see \cite{NB,IF}. However in terms of integral order the accuracy of nabla Caputo derivative and Riemann-Liouville nabla derivative are almost same.
\end{rem}
\section{Existence and uniqueness of Impulsive fractional dynamic equation}
At first, we compare the dynamic Equation (\ref{eq1}) with a model of population dynamics with a stop start phenomena, where $p(\theta)$ is a population of a particular species of insect at a time $\theta.$ If we include the toxic effect on that particular species, and we noticed the change of population which is present by the Caputo derivative operator $^{C}\mathscr{D}^{w}p(\theta)$ of the fractional order w, with respect to time $\theta$ on the interval $\mathcal{I_{T}} = [0, T]$. Now we consider the case where at certain moments $\theta_{1}, \theta_{2}, \theta_{3}, \cdot \cdot \cdot$ such that $0 < \theta_{1} < \theta_{2}<, \cdot \cdot \cdot, \theta_{m} < \theta_{m + 1} = T, \lim_{k \to \infty}\theta_{k} = \infty,$ impulse effect act on the population ``momentarily", so that the population $p(\theta)$ varies by jump. And $p(\theta_{k}^{+})$ and $p(\theta_{k}^{-})$ present the population of the species before and after the impulsive effect at the time $\theta_{k}.$ \\
\indent Consider a set of all ld continuous function $\mathcal{C}(\mathcal{I}_{T}, \mathbb{R})$ from $\mathcal{I}_{T}$ to $\mathbb{R}.$ Set $\mathcal{I}_{0} = [0, \theta_{1}],$ and $\mathcal{I}_{k} = [\theta_{k}, \theta_{k + 1}]$ for each $k = 1, 2, 3, \cdot \cdot \cdot, m.$\\
 Consider 
\begin{center}
$\mathcal{PC}(\mathcal{I}_{T}, \mathbb{R}) = \{p: \mathcal{I}_{k} \to \mathbb{R}, p \in \mathcal{C}(\mathcal{I}_{T}, \mathcal{R}),$ $p(\theta_{k}^{+})$ and $p(\theta_{k}^{-})$ exist with $p(\theta_{k}^{-}) = p(\theta_{k}), k = 1, 2, 3, \cdot \cdot \cdot, m$\}, 
\end{center}
and
\begin{center}
 $\mathcal{PC}^{1}(\mathcal{I}_{T}, \mathbb{R}) = \{p: \mathcal{I}_{k} \to \mathbb{R}, p  \in \mathcal{C}^{1}(\mathcal{I}_{T}, \mathbb{R}), k = 1, 2, 3, \cdot \cdot \cdot, m$\},
 \end{center} where $\mathcal{C}^{1}(\mathcal{I}_{T}, \mathbb{R})$ is the set of all function from $\mathcal{I}_{k}$ to $\mathbb{R}.$ which is ld continuously nabla differentiable function. $\mathcal{PC}(\mathcal{I}_{T}, \mathbb{R})$ is a Banach space coupled with the norm \begin{center}$||p||_{\mathcal{PC}} = \sup_{\theta \in \mathcal{I}_{T}}|p(\theta)|.$\end{center}.
\begin{defn}\label{d12}
A function $p\in \mathcal{PC}^{1}(\mathcal{I}_{T}, \mathbb{R})$ is called a solution of the Equation (\ref{eq1}), if $p$ satisfies the Equation (\ref{eq1}) on $\mathcal{I}_{T}$ involving the condition 
$p(\theta_{k}^{+}) - p(\theta_{k}^{-}) = \mathscr{I}_{k}(\theta_{k}, p(\theta_{k}^{-})$ and $p(0) = \phi(T)$.
\end{defn}
\begin{lem}\label{l2}
Consider a ld continuous function $\mathscr{H}: \mathcal{I}_{T} \to \mathbb{R}.$ Then the solution of the problem for $k = 1, 2, 3,\dots, m,$ is
\begin{equation}\label{eq6}
\begin{cases}
^{C}\mathscr{D}^{w}p(\theta) = \mathscr{H}(\theta), &\theta \in \mathcal{I}_{T}, \theta \neq \theta_{k}\\
p(\theta_{k}^{+}) - p(\theta_{k}^{-}) = \mathscr{I}_{k}\big(\theta_{k}, p(\theta_{k}^{-}\big), & k = 1, 2, 3, \cdot \cdot \cdot, m\\
p(0) = \phi(p),
\end{cases}
\end{equation}
specified by the integral equation
\begin{equation}\label{eq7}
p(\theta)
\begin{cases}
\phi(p) + \frac{1}{\Gamma(w)}\int_{0}^{\theta}\big(\theta - \alpha(\zeta)\big)^{w - 1}\mathscr{H}(\zeta)\nabla \zeta, & \theta \in \mathcal{I}_{0}\\
\phi(p) + \frac{1}{\Gamma(w)}\sum_{i = 1}^{k}\int_{\theta_{i - 1}}^{\theta_{i}}\big(\theta_{i} - \alpha(\zeta)\big)^{w -1}\mathscr{H}(\zeta)\nabla \zeta +\\ \frac{1}{\Gamma(w)}\int_{\theta_{k}}^{\theta}\big(\theta - \alpha(\zeta)\big)^{w -1}\mathscr{H}(\zeta)\nabla \zeta + \sum_{i = 1}^{k}\mathscr{I}_{i}\big(\theta_{i}, p(\theta_{i}^{-})\big), & \theta \in \mathcal{I}_{k}.
\end{cases}
\end{equation}
\end{lem}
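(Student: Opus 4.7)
The plan is to prove both directions of the equivalence by working interval-by-interval and using Definition \ref{d11} as the non-impulsive building block, then propagating the solution through each jump discontinuity. For the forward direction, I would suppose $p$ solves the impulsive problem (\ref{eq6}) and derive the representation (\ref{eq7}); for the reverse direction, I would differentiate (\ref{eq7}) using Lemma \ref{l1} to recover (\ref{eq6}), together with checking the impulse and initial conditions by direct substitution.

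For the base case $\theta \in \mathcal{I}_0 = [0,\theta_1]$, there are no impulses yet, so the equation $^{C}\mathscr{D}^{w}p(\theta) = \mathscr{H}(\theta)$ with $p(0) = \phi(p)$ is exactly the setting of Definition \ref{d11}. Applying that equivalence with $\mathcal{L}(x,p(x),{}^{C}\mathscr{D}^{w}p(x)) = \mathscr{H}(x)$ yields immediately
\begin{align*}
p(\theta) = \phi(p) + \frac{1}{\Gamma(w)}\int_{0}^{\theta}\bigl(\theta - \alpha(\zeta)\bigr)^{w-1}\mathscr{H}(\zeta)\nabla\zeta,
\end{align*}
which is the first branch of (\ref{eq7}). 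In particular, evaluating at $\theta_1^{-}$ gives an explicit value for $p(\theta_1^{-})$, and the impulse condition then determines
\begin{align*}
p(\theta_1^{+}) = p(\theta_1^{-}) + \mathscr{I}_1\bigl(\theta_1, p(\theta_1^{-})\bigr).
\end{align*}

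For the inductive step, assume the formula holds on $\mathcal{I}_{k-1}$ and let $\theta \in \mathcal{I}_k = [\theta_k,\theta_{k+1}]$. On this subinterval the solution again satisfies $^{C}\mathscr{D}^{w}p = \mathscr{H}$, now with initial value $p(\theta_k^{+})$ at the left endpoint; applying Definition \ref{d11} relative to the starting point $\theta_k$ gives
\begin{align*}
p(\theta) = p(\theta_k^{+}) + \frac{1}{\Gamma(w)}\int_{\theta_k}^{\theta}\bigl(\theta - \alpha(\zeta)\bigr)^{w-1}\mathscr{H}(\zeta)\nabla\zeta.
\end{align*}
Using the impulse relation $p(\theta_k^{+}) = p(\theta_k^{-}) + \mathscr{I}_k(\theta_k,p(\theta_k^{-}))$ and substituting the inductive formula for $p(\theta_k^{-})$ (evaluated at the right endpoint of $\mathcal{I}_{k-1}$) telescopes the expression into the accumulated sum of integrals over $[\theta_{i-1},\theta_i]$ plus the accumulated jumps $\sum_{i=1}^{k}\mathscr{I}_i(\theta_i,p(\theta_i^{-}))$ appearing in (\ref{eq7}). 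For the converse, I would apply $^{C}\mathscr{D}^{w}$ to both branches of (\ref{eq7}); the finite sums of constants (the jumps and the accumulated integrals up to $\theta_k$) vanish under the derivative because $^{C}\mathscr{D}^{w}$ annihilates constants, while Lemma \ref{l1} reduces the remaining Riemann--Liouville integral term back to $\mathscr{H}(\theta)$. Checking $p(0) = \phi(p)$ and the jump condition is then immediate from the two branches.

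The main obstacle I expect is bookkeeping at the impulse points: verifying that the left limit of the $\mathcal{I}_k$ branch of (\ref{eq7}) as $\theta \to \theta_{k+1}^{-}$ matches the value that, after adding $\mathscr{I}_{k+1}$, agrees with the $\mathcal{I}_{k+1}$ branch at $\theta_{k+1}^{+}$, and in particular that the replacement of $\bigl(\theta - \alpha(\zeta)\bigr)^{w-1}$ by $\bigl(\theta_{k+1} - \alpha(\zeta)\bigr)^{w-1}$ in passing from the ``current'' integral to the summand at index $i = k+1$ is what makes the telescoping work. A secondary subtlety is justifying $^{C}\mathscr{D}^{w}$ of the fixed-endpoint integrals $\int_{\theta_{i-1}}^{\theta_i}(\theta_i - \alpha(\zeta))^{w-1}\mathscr{H}(\zeta)\nabla\zeta$, which are constants in $\theta$ and hence differentiate to zero, but this must be stated carefully in the nabla time-scale setting to avoid ambiguity.
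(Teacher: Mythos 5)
Your proposal follows essentially the same route as the paper's own proof: solve the non-impulsive problem on $\mathcal{I}_0$ via Definition \ref{d11}, restart the representation at each $\theta_k$ with initial value $p(\theta_k^{+})$, propagate through the jump condition, and close the argument by induction on $k$. Your additional remarks on the converse direction and on the kernel bookkeeping at the impulse points go slightly beyond what the paper writes down, but the core argument is the same.
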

\begin{proof}
If $\theta \in \mathcal{I}_{0},$ then the solution of Equation (\ref{eq6}) is given by 
\begin{align}\label{eq8}
p(\theta) = \phi(p) + \frac{1}{\Gamma(w)}\int_{0}^{\theta}\big(\theta - \alpha(\zeta)\big)^{w - 1}\mathscr{H}(\zeta)\nabla \zeta.
\end{align}
For $\theta \in \mathcal{I}_{1},$ the problem
$$\begin{cases}
^{C}\mathscr{D}^{w}p(\theta) = \mathscr{H}(\theta),\\
p(\theta_{1}^{+}) - p(\theta_{1}^{-}) = \mathscr{I}_{1}\big(\theta_{1}, p(\theta_{1}^{-})\big),
\end{cases}$$
hold the solution
\begin{align}\label{eq9}
p(\theta) = p(\theta_{1}^{+}) + \frac{1}{\Gamma(w)}\int_{\theta_{1}}^{\theta}\big(\theta - \alpha(\zeta)\big)^{w -1}\mathscr{H}(\zeta)\nabla \zeta.
\end{align} 
 So from
\begin{align}\label{eq10}
p(\theta_{1}^{+}) - p(\theta_{1}^{-}) = \mathscr{I}_{1}\big(\theta_{1}, p(\theta)\big),
\end{align}
we get
\begin{align*}
p(\theta) = p(\theta_{1}^{-}) + \mathscr{I}_{1}\big(\theta_{1}, p(\theta_{1}^{-}) + \frac{1}{\Gamma(w)}\int_{0}^{\theta}\big(\theta - \alpha(\zeta)\big)^{w -1}\mathscr{H}(\zeta)\nabla \zeta.
\end{align*} which follows that 
\begin{align*}
p(\theta) = &\phi(p) + \mathscr{I}_{1}\big(\theta_{1}, p(\theta_{1}^{-})\big) + \frac{1}{\Gamma(w)}\int_{\theta_{1}}^{\theta}\big(\theta - \alpha(\zeta)\big)^{w - 1}\mathscr{H}(\zeta)\nabla \zeta \\
& + \frac{1}{\Gamma(w)}\int_{0}^{\theta}\big(\theta - \alpha(\zeta)\big)^{w - 1}\mathscr{H}(\zeta)\nabla \zeta,    \theta \in \mathcal{I}_{1}.
 \end{align*}
 Generalizing in this way, by using the principle of mathematical induction, for $\theta \in \mathcal{I}_{k}, k = 1, 2, 3, \cdot\cdot\cdot, m$ we conclude that
 \begin{align*}
 p(\theta) = &\phi(p) + \frac{1}{\Gamma(w)}\int_{\theta_{k}}^{\theta}\big(\theta - \alpha(\zeta)\big)^{w - 1}\mathscr{H}(\zeta)\nabla \zeta + \sum_{i = 1}^{k}\frac{1}{\Gamma(w)}\int_{\theta_{i - 1}}^{\theta_{i}}\big(\theta - \alpha(\zeta)\big)^{w - 1}\mathscr{H}(\zeta)\nabla \zeta\\ +&\sum_{i = 1}^{k}\mathscr{I}_{i}(\theta_{i}, p(\theta_{i}),       k = 1, 2, 3, \cdot \cdot \cdot, m.
 \end{align*}
To establish the existence and uniqueness of the impulsive dynamic Equation (\ref{eq1}), we need to assume the following conditions:\\
$(\mathcal{A}_{1})$ The mapping $\mathcal{L}: \mathscr{I} \times \mathbb{R} \times \mathbb{R} \to \mathbb{R}$ is a ld continuous and there must have constants $\mathscr{K} > 0$ and $0 < \mathscr{G} < 1$ which satisfies
$$|\mathcal{L}(\theta, \zeta_{1}, \zeta_{2}) - \mathcal{L}(\theta, \eta_{1}, \eta_{2})| \le \mathscr{K}|\zeta_{1} - \eta_{1}| + \mathscr{G}|\zeta_{2} - \eta_{2}|,
\forall \theta \in \mathcal{J}, \zeta_{i}, \eta_{i} \in \mathbb{R}.$$ 
$(\mathcal{A}_{2}).$ There exist constants $\mathscr{A}> 0, \mathscr{F} > 0$ and $0 < \mathscr{E} < 1,$ such that
$$|\mathcal{L}(\theta, \zeta, \eta)| \le \mathscr{A} + \mathscr{F}|\zeta| + \mathscr{E}|\eta|,  \forall \zeta, \eta \in \mathbb{R}.$$
$(\mathcal{A}_{3}).$ The function $\mathscr{I}_{k}(\theta, p)$ is continuous for all $k = 1, 2, 3, \cdot\cdot\cdot, m$ and satisfies the following: \\
\indent  There exists a positive constant $\mathscr{M}_{k}$ for $k = 1, 2, 3, \cdot\cdot\cdot, m$ such that
\begin{align*}
|\mathscr{I}_{k}(\theta, p)| \le \mathscr{M}_{k}, \forall \theta \in \mathcal{I}_{k}, p \in \mathbb{R}.
\end{align*} 
 \indent And there exists a positive constant $\mathscr{L}_{k},$ for $k = 1, 2, 3,\cdot\cdot\cdot, m$ such that
\begin{align*}
|\mathscr{I}_{k}(\theta, p) - \mathscr{I}_{k}(\theta, h)| \le \mathscr{L}_{k}|p - h|, \forall \theta \in \mathcal{I}_{k}, p, h \in \mathbb{R}.
\end{align*} 
$(\mathcal{A}_{4})$: There exists a non negative increasing function $\mu: \mathbb{R^{+}} \to \mathbb{R^{+}}$ such that
\begin{center}
$|\phi(\theta)| \le \mu(|\theta|)$ for every $\theta \in \mathcal{I}_{T},$ 
\end{center}
and there exists a positive constant $\mathscr{H}$ such that
\begin{center}
$|\phi(\theta) - \phi(\zeta)| \le \mathscr{H}|\theta - \zeta|$ for $\theta, \zeta \in \mathcal{I}_{T}.$
\end{center}
\end{proof}
\begin{thm}\label{th2}
If all the conditions $(\mathcal{A}_{1})$ - $(\mathcal{A}_{4})$ and 
\begin{center}$\sum_{i = 1}^{m}\mathscr{L}_{i} + \mathscr{H} + \frac{\mathscr{K}T^{w}(m + 1)}{(1 - \mathscr{G})\Gamma(w + 1)} < 1$ are hold,
\end{center} then the Equation (\ref{eq1}) has a unique solution on the interval $\mathcal{I}_{T}.$
\end{thm}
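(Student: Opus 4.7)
The strategy is a straightforward application of the Banach contraction principle on the space $\mathcal{PC}(\mathcal{I}_T,\mathbb{R})$ equipped with $\|\cdot\|_{\mathcal{PC}}$, using the integral representation provided by Lemma \ref{l2}. Concretely, I would define the operator $\mathcal{N}:\mathcal{PC}(\mathcal{I}_T,\mathbb{R})\to\mathcal{PC}(\mathcal{I}_T,\mathbb{R})$ by
\[
(\mathcal{N}p)(\theta)=\phi(p)+\frac{1}{\Gamma(w)}\sum_{i=1}^{k}\int_{\theta_{i-1}}^{\theta_i}(\theta_i-\alpha(\zeta))^{w-1}\mathcal{F}_p(\zeta)\nabla\zeta+\frac{1}{\Gamma(w)}\int_{\theta_k}^{\theta}(\theta-\alpha(\zeta))^{w-1}\mathcal{F}_p(\zeta)\nabla\zeta+\sum_{i=1}^{k}\mathscr{I}_i(\theta_i,p(\theta_i^-))
\]
for $\theta\in\mathcal{I}_k$ (with the single-integral formula on $\mathcal{I}_0$), where $\mathcal{F}_p(\zeta):=\mathcal{L}(\zeta,p(\zeta),{}^C\mathscr{D}^w p(\zeta))$. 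By Lemma \ref{l2}, fixed points of $\mathcal{N}$ are exactly solutions of \eqref{eq1}, so it suffices to show that $\mathcal{N}$ is a contraction.

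The first step is to handle the implicit dependence on ${}^C\mathscr{D}^w p$ inside $\mathcal{L}$. Using $(\mathcal{A}_1)$, if $p,h\in\mathcal{PC}(\mathcal{I}_T,\mathbb{R})$ and we write $u={}^C\mathscr{D}^w p$, $v={}^C\mathscr{D}^w h$, then
\[
|u(\zeta)-v(\zeta)|=|\mathcal{L}(\zeta,p,u)-\mathcal{L}(\zeta,h,v)|\le \mathscr{K}|p(\zeta)-h(\zeta)|+\mathscr{G}|u(\zeta)-v(\zeta)|,
\]
which, since $0<\mathscr{G}<1$, yields the explicit bound
\[
|\mathcal{F}_p(\zeta)-\mathcal{F}_h(\zeta)|\le \frac{\mathscr{K}}{1-\mathscr{G}}|p(\zeta)-h(\zeta)|\le \frac{\mathscr{K}}{1-\mathscr{G}}\|p-h\|_{\mathcal{PC}}.
\]
The second step is the kernel estimate: for any $a\le b$ in $\mathcal{I}_T$, Proposition \ref{p01} applied to the increasing extension of $\zeta\mapsto(b-\alpha(\zeta))^{w-1}$ gives
\[
\int_{a}^{b}(b-\alpha(\zeta))^{w-1}\nabla\zeta\le \int_{a}^{b}(b-\zeta)^{w-1}d\zeta=\frac{(b-a)^{w}}{w}\le \frac{T^{w}}{w}.
\]

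Now I would estimate $|(\mathcal{N}p)(\theta)-(\mathcal{N}h)(\theta)|$ for $\theta\in\mathcal{I}_k$. Using $(\mathcal{A}_4)$ on the $\phi$-term, $(\mathcal{A}_3)$ (Lipschitz part) on the impulse sum, and the two bounds above on each of the $k$ integrals over $[\theta_{i-1},\theta_i]$ plus the terminal integral over $[\theta_k,\theta]$, we obtain at most $k+1\le m+1$ integral contributions and hence
\[
\|\mathcal{N}p-\mathcal{N}h\|_{\mathcal{PC}}\le \Bigl(\mathscr{H}+\sum_{i=1}^{m}\mathscr{L}_i+\frac{\mathscr{K}T^{w}(m+1)}{(1-\mathscr{G})\,\Gamma(w+1)}\Bigr)\|p-h\|_{\mathcal{PC}}.
\]
The hypothesis makes this factor strictly less than $1$, so $\mathcal{N}$ is a contraction on the Banach space $\mathcal{PC}(\mathcal{I}_T,\mathbb{R})$, and Banach's fixed point theorem delivers a unique fixed point, i.e.\ a unique solution of \eqref{eq1} on $\mathcal{I}_T$.

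The main technical obstacle I anticipate is the implicit estimate on ${}^C\mathscr{D}^w p-{}^C\mathscr{D}^w h$: one must be careful that this derivative appears \emph{inside} $\mathcal{L}$ rather than as a separate variable, and absorb the $\mathscr{G}$-term into the left-hand side to get a clean Lipschitz constant $\mathscr{K}/(1-\mathscr{G})$. A secondary bookkeeping issue is counting integrals correctly so that the factor $(m+1)$ (rather than $m$ or $2m$) emerges in the final estimate; this is what matches the constant appearing in the hypothesis and is why the theorem is stated with $(m+1)$ in the numerator.
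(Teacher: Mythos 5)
Your proposal is correct and follows essentially the same route as the paper: the same integral operator from Lemma \ref{l2}, the same absorption of the implicit term via $(\mathcal{A}_1)$ to get the Lipschitz constant $\mathscr{K}/(1-\mathscr{G})$, the same kernel bound via Proposition \ref{p01} yielding the factor $(m+1)T^{w}/\Gamma(w+1)$, and the Banach contraction principle. The only (harmless) difference is that the paper first restricts the operator to a ball $\Pi$ of radius $\sigma$ and verifies invariance using $(\mathcal{A}_2)$, whereas you apply the contraction principle directly on all of $\mathcal{PC}(\mathcal{I}_T,\mathbb{R})$, which makes $(\mathcal{A}_2)$ unnecessary for this particular theorem.
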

\begin{proof}
Let $^{C}\mathcal{D}^{w}p(\theta) = h(\theta).$ Consider a set $\Pi \subseteq \mathcal{PC}(\mathcal{I}_{k}, \mathbb{R}),$ such that
\begin{center}  $\Pi = \{p \in \mathcal{PC}^{1}(\mathcal{I}_{k}, \mathbb{R}): ||p||_{\mathcal{PC}} \le \sigma\},$
\end{center}
and an operator $\mathcal{X}: \Pi \to \Pi$ such that 
\begin{align*}
(\mathcal{X}p)(\theta) =  \phi(p) + \frac{1}{\Gamma(w)}\int_{0}^{\theta}\big(\theta - \alpha(\zeta)\big)^{w - 1}\mathcal{L}(\theta, p(\theta), ^{C}\mathscr{D}^{w}p(\theta))\nabla \zeta,
\end{align*} for $\theta \in \mathcal{I}_{0}.$
And 
\begin{align*}
(\mathcal{X}p)(\theta) =&  \phi(p) + \frac{1}{\Gamma(w)}\sum_{i = 1}^{k}\int_{\theta_{i - 1}}^{\theta_{i}}\big(\theta - \alpha(\zeta)\big)^{w - 1}\mathcal{L}(\theta, p(\theta), h(\theta))\nabla \zeta + \sum_{i = 1}^{k}\mathscr{I}_{i}(\theta_{i}, p(\theta_{i}^{-}))\\ &+ \frac{1}{\Gamma(w)}\int_{\theta_{k}}^{\theta}\big(\theta - \alpha(\zeta)\big)^{w - 1}\mathcal{L}(\theta, p(\theta), ^{C}\mathscr{D}^{w}p(\theta))\nabla \zeta,
\end{align*} for $\theta \in \mathcal{I}_{k}, k = 1, 2, 3, . . ., m.$\\

Case 1: When $\theta \in \mathcal{I}_{k},$ then for any $p \in \Pi,$ we get
\begin{align*}
|(\mathcal{X}p)(\theta)| =& |\phi(p)| + \big|\frac{1}{\Gamma(w)}\sum_{i = 1}^{k}\int_{\theta_{i - 1}}^{\theta_{i}}\big(\theta - \alpha(\zeta)\big)^{w - 1}h(\zeta)\nabla \zeta\big|\\& + \big|\sum_{i = 1}^{k}\mathscr{I}_{k}(\theta_{i}, p(\theta_{i}^{-}))\big| + \big|\frac{1}{\Gamma(w)}\int_{\theta_{k}}^{\theta}\big(\theta - \alpha(\zeta)\big)^{w - 1}h(\zeta)\nabla \zeta\big|,
\end{align*}
where $h \in \mathcal{PC}(\mathcal{I}_{T}, \mathbb{R}), \theta \in \mathcal{I}_{T},$ then from the Equation (\ref{eq1}) we get
$h = \mathcal{L}(\theta, p, h).$ Hence
\begin{align}
|h| = &|\mathcal{L}(\theta, p, h)|\nonumber\\
& \le \mathscr{A} + \mathscr{F}|p(\theta)| + \mathscr{E}|h(\theta)|\nonumber\\
& \le \frac{\mathscr{A}+ \mathscr{F}\sigma}{1 - \mathscr{E}}\label{eq11}. 
\end{align}Again taking the norm of $\mathcal{PC}(\mathcal{I}_{T}, \mathbb{R}),$ we get
\begin{center}
$||h||_{\mathcal{PC}} \le \frac{\rho + \mathscr{F}||p||_{\mathcal{PC}}}{1 - \mathscr{E}},$ where $||\mathscr{A}||_{\mathcal{PC}} = \rho.$
\end{center}
 Now by using the Proposition \ref{p01} along with the condition of Case 1, we get
\begin{align}
||\mathcal{X}p||_{\mathcal{PC}}& = \sup_{\theta \in \mathcal{J}_{T}}|\mathcal{X}p(\theta)|\nonumber\\
& \le \mu|p| +\sum_{i = 1}^{m}\mathscr{M}_{i} + \frac{\mathscr{A}+ \mathscr{F}|p|}{(1 - \mathscr{E})\Gamma(w)}\Big[\sum_{i = 1}^{m}\int_{\theta_{i - 1}}^{\theta_{i}}(\theta - \zeta)^{w - 1} d\zeta + \int_{\theta_{k}}^{\theta}\big(\theta - \zeta\big)^{w - 1}d\zeta\Big]\nonumber\\
& \le \mu\sigma +\sum_{i = 1}^{m}\mathscr{M}_{i} +  \frac{T^{w}(\rho + \mathscr{F}\sigma)(m + 1)}{\Gamma(w + 1)(1 - \mathscr{E})}\nonumber\\
& \le \sigma\label{eq13},
\end{align} where 
\begin{align*}
\sigma = \frac{\sum_{i = 1}^{m}\mathscr{M}_{i} + \frac{(m + 1)T^{w}\rho}{\Gamma(w + 1)(1 - \mathscr{E})}}{1 - \mu + \frac{(m + 1)T^{w}\mathscr{F}}{\Gamma(w + 1)(1 - \mathscr{E})}}.
\end{align*}
Case 2: if $\theta \in \mathcal{I}_{0},$ then similarly we can show that
\begin{align}
||\mathcal{X}p||_{\mathcal{PC}}& \le \mu\sigma + \frac{T^{w}\big(\rho + \mathscr{F}\sigma\big)}{(1 - \mathscr{E})\Gamma(w + 1)}\nonumber\\
& \le \sigma.\label{eq14}
\end{align}
Thus from the Equation (\ref{eq13}) and Equation (\ref{eq14}), we get
$||\mathcal{X}p||_{\mathcal{PC}} \le \sigma.$\\
Again for $p, q \in \Pi,$ we get 
\begin{align}
||\mathcal{X}p - \mathcal{X}q||_{\mathcal{PC}} =& \sup_{\theta \in \mathcal{I}_{k}}|(\mathcal{X}p)(\theta) - (\mathcal{X}q)(\theta)|\nonumber\\
&\le\sum_{i = 1}^{k}\big|\mathscr{I}_{i}(\theta_{i}, p(\theta_{i}^{-})) - \mathscr{I}_{i}(\theta_{i}, q(\theta_{i}^{-}))\big| + \frac{1}{\Gamma(w)}\big|\int_{\theta_{k}}^{\theta}\big(\theta - \alpha(\zeta)\big)^{w - 1}(h(\zeta) - g(\zeta))\nabla \zeta\big| \nonumber\\& + \frac{1}{\Gamma(w)}\big|\sum_{i = 1}^{k}\int_{\theta_{i - 1}}^{\theta_{i}}\big(\theta - \alpha(\zeta)\big)^{w - 1}(h(\zeta) - g(\zeta))\nabla \zeta\big| + |\phi(p) - \phi(q)|\label{eq15},
\end{align}
where $g \in \mathcal{PC}(\mathcal{J}_{T}, \mathbb{R}),$ given by $g(\theta) = \mathcal{L}(\theta, q(\theta), g(\theta)),$ and for $\theta \in \mathcal{I}_{T},$
\begin{align*}
|h(\theta) - g(\theta)| = &\big|\mathcal{L}(\theta, p(\theta), h(\theta)) - \mathcal{L}(\theta, q(\theta), g(\theta))\big|\\ \le &\mathscr{K}|p(\theta) - q(\theta)| + \mathscr{G}|h(\theta) - g(\theta)|\\
\le & \frac{\mathscr{K}|p(\theta) - q(\theta)|}{1 - \mathscr{G}}.
\end{align*}
Taking the norm of $\mathcal{PC}(\mathcal{I}_{T}, \mathbb{R}),$ we get
\begin{align}\label{eq16}
||h - g||_{\mathcal{PC}} \le \frac{\mathscr{K}||p - q||_{\mathcal{PC}}}{1 - \mathscr{G}}.
\end{align}
Thus, from the Equation (\ref{eq15}), we obtain 
\begin{align}
||\mathcal{X}p - \mathcal{X}q||_{\mathcal{PC}} & \le \sum_{i = 1}^{m}\mathscr{L}_{i}|p(\theta_{i}^{-}) - q(\theta_{i}^{-})| + \frac{\mathscr{K}|p(\zeta) - q(\zeta)|}{(1 - \mathscr{G})\Gamma(w)}\int_{\theta_{k}}^{\theta}(\theta - \zeta\big)^{w - 1}d\zeta\nonumber\\& + \frac{\mathscr{K}|p(\zeta) - q(\zeta)|}{(1 - \mathscr{G})\Gamma(w)}\sum_{i = 1}^{m}\int_{\theta_{i - 1}}^{\theta_{i}}\big(\theta - \zeta\big)^{w - 1}d\zeta + \mathscr{H}|p - q|\nonumber\\
& \le ||p - q||_{\mathcal{PC}}\sum_{i = 1}^{m}\mathscr{L}_{i} + \frac{\mathscr{K}T^{w}||p - q||_{\mathcal{PC}}}{(1 - \mathscr{G})\Gamma(w + 1)} + \frac{m\mathscr{K}T^{w}||p - q||_{\mathcal{PC}}}{(1 - \mathscr{G})\Gamma(w + 1)} + \mathscr{H}||p - q||_{\mathcal{PC}}\nonumber\\
&\le \Big(\sum_{i = 1}^{m}\mathscr{L}_{i} + \frac{\mathscr{K}T^{w}(m + 1)}{(1 - \mathscr{G})\Gamma(w + 1)} + \mathscr{H}\Big)||p - q|_{\mathcal{PC}}\label{eq17}.
\end{align}
Similarly for $\theta \in \mathcal{I}_{0},$ we get
\begin{align}\label{eq18}
||\mathcal{X}p - \mathcal{X}q||_{\mathcal{PC}} \le \Big(\mathscr{H} + \frac{\mathscr{K}T^{w}}{(1 - \mathscr{G})\Gamma(w + 1)}\Big)||p - q||_{\mathcal{PC}}.
\end{align}
From the Equation (\ref{eq17}) and Equation (\ref{eq18}), we have 
\begin{align*}
||\mathcal{X}p - \mathcal{X}q||_{\mathcal{PC}} \le \mathscr{U}||p - q||_{\mathcal{PC}},
\end{align*}where
\begin{center}$\mathscr{U} = \sum_{i = 1}^{m}\mathscr{L}_{i} + \frac{\mathscr{K}T^{w}(m + 1)}{(1 - \mathscr{G})\Gamma(w + 1)} + \mathscr{H}.$\end{center}
Since $\mathscr{U} < 1,$ so the operator $\mathcal{X} : \Pi \to \Pi$ is a contraction operator hence it has a fixed point by Banach contraction theorem, which is the solution of the Equation (\ref{eq1}).
\end{proof}
\indent We use the Schaefer's fixed point theorem to show the sufficient condition of the existence of the solution for the Equation (\ref{eq1}).
\begin{thm}\label{th3}
If the assumptions $(\mathcal{A}_{1}) - (\mathcal{A}_{4})$ are satisfied and there exists a positive constant $\beta$ such that 
\begin{equation}\label{eq19}
\mu\beta + \sum_{i = 1}^{m}\mathscr{M}_{i} + \frac{(m + 1)T^{w}(\mathscr{A} + \mathscr{F}\beta)}{\Gamma(w + 1)(1 - \mathscr{E})} < \beta,
\end{equation}
then Equation (\ref{eq1}) has at least one solution on $\mathcal{I}_{T}.$
\end{thm}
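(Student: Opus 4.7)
The plan is to apply Schaefer's fixed point theorem (Definition \ref{d10}) to the operator $\mathcal{X}: \mathcal{PC}(\mathcal{I}_T,\mathbb{R}) \to \mathcal{PC}(\mathcal{I}_T,\mathbb{R})$ introduced in the proof of Theorem \ref{th2}, where a fixed point of $\mathcal{X}$ is exactly a solution of \eqref{eq1} by Lemma \ref{l2}. Concretely, I need to verify two things: (i) $\mathcal{X}$ is completely continuous on the Banach space $\mathcal{PC}(\mathcal{I}_T,\mathbb{R})$, and (ii) the set $\Psi = \{p \in \mathcal{PC}(\mathcal{I}_T,\mathbb{R}) : p = \lambda\mathcal{X}(p),\ 0 < \lambda < 1\}$ is bounded; then the theorem gives a fixed point.

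\emph{Step 1: continuity of $\mathcal{X}$.} Take a sequence $p_n \to p$ in $\mathcal{PC}$. For each $\theta$, set $h_n(\theta) = \mathcal{L}(\theta,p_n(\theta),h_n(\theta))$ and $h(\theta) = \mathcal{L}(\theta,p(\theta),h(\theta))$. Using $(\mathcal{A}_1)$ exactly as in the derivation of \eqref{eq16}, I get $\|h_n - h\|_{\mathcal{PC}} \le \frac{\mathscr{K}}{1-\mathscr{G}}\|p_n - p\|_{\mathcal{PC}} \to 0$. Combining with the Lipschitz continuity of $\phi$ from $(\mathcal{A}_4)$ and the continuity of each $\mathscr{I}_k$ from $(\mathcal{A}_3)$, and applying Proposition \ref{p01} to estimate the fractional integrals, we conclude $\|\mathcal{X}p_n - \mathcal{X}p\|_{\mathcal{PC}} \to 0$.

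\emph{Step 2: uniform boundedness and equicontinuity on bounded sets.} Fix $B_r = \{p \in \mathcal{PC}: \|p\|_{\mathcal{PC}} \le r\}$. From $(\mathcal{A}_2)$, exactly as in \eqref{eq11}, $\|h\|_{\mathcal{PC}} \le (\rho + \mathscr{F} r)/(1-\mathscr{E})$. Then $(\mathcal{A}_3)$, $(\mathcal{A}_4)$, and Proposition \ref{p01} give $\|\mathcal{X}p\|_{\mathcal{PC}} \le \mu(r) + \sum_i \mathscr{M}_i + \frac{(m+1)T^w(\rho+\mathscr{F}r)}{\Gamma(w+1)(1-\mathscr{E})}$, so $\mathcal{X}(B_r)$ is bounded. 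For equicontinuity on each subinterval $\mathcal{I}_k$, take $\theta_1 < \theta_2$ in $\mathcal{I}_k$ and split the estimate
\[
|(\mathcal{X}p)(\theta_2) - (\mathcal{X}p)(\theta_1)| \le \frac{1}{\Gamma(w)}\Big|\int_{\theta_k}^{\theta_1}\big[(\theta_2-\alpha(\zeta))^{w-1} - (\theta_1-\alpha(\zeta))^{w-1}\big]h(\zeta)\,\nabla\zeta\Big| + \frac{1}{\Gamma(w)}\Big|\int_{\theta_1}^{\theta_2}(\theta_2-\alpha(\zeta))^{w-1}h(\zeta)\,\nabla\zeta\Big|.
\]
Passing to the continuous extension via Proposition \ref{p01}, the bound on $\|h\|_{\mathcal{PC}}$ together with uniform continuity of $t\mapsto t^{w}$ on $[0,T]$ forces both terms to tend to $0$ as $|\theta_2-\theta_1|\to 0$, uniformly in $p \in B_r$. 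Thus $\mathcal{X}(B_r)$ is relatively compact by Arzela-Ascoli (Definition \ref{d8}), so $\mathcal{X}$ is completely continuous.

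\emph{Step 3: boundedness of $\Psi$.} Let $p = \lambda \mathcal{X}(p)$ for some $0<\lambda<1$. Repeating the calculation in Case 1 of Theorem \ref{th2} with $p$ in place of a generic element of $\Pi$ and applying $(\mathcal{A}_2)$-$(\mathcal{A}_4)$ yields
\[
\|p\|_{\mathcal{PC}} \le \lambda\Big(\mu(\|p\|_{\mathcal{PC}}) + \sum_{i=1}^{m}\mathscr{M}_i + \frac{(m+1)T^w(\mathscr{A} + \mathscr{F}\|p\|_{\mathcal{PC}})}{\Gamma(w+1)(1-\mathscr{E})}\Big) < \mu(\|p\|_{\mathcal{PC}}) + \sum_{i=1}^{m}\mathscr{M}_i + \frac{(m+1)T^w(\mathscr{A}+\mathscr{F}\|p\|_{\mathcal{PC}})}{\Gamma(w+1)(1-\mathscr{E})}.
\]
Suppose, for contradiction, that $\|p\|_{\mathcal{PC}} \ge \beta$. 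By monotonicity of $\mu$ and the above chain of inequalities, this contradicts hypothesis \eqref{eq19}. Therefore $\|p\|_{\mathcal{PC}} < \beta$ for every $p \in \Psi$, and $\Psi$ is bounded. Schaefer's theorem now supplies the desired fixed point of $\mathcal{X}$, equivalently a solution of \eqref{eq1} on $\mathcal{I}_T$.

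The main obstacle I anticipate is the equicontinuity estimate in Step 2: the time-scale integrands $(\theta - \alpha(\zeta))^{w-1}$ have the $w-1 < 0$ singularity at $\zeta = \theta$, so care is needed when splitting the domain to handle the left-scattered points, which is why Proposition \ref{p01} is invoked to pass to the continuous extension before bounding the Riemann-type integral $\int_{\theta_1}^{\theta_2}(\theta_2-\zeta)^{w-1}d\zeta = (\theta_2-\theta_1)^w/w$.
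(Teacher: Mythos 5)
Your Steps 1 and 2 follow the paper's proof essentially verbatim (the paper splits your Step 2 into its Steps 2 and 3, establishing boundedness of $\mathcal{X}(\Pi)$ and equicontinuity separately before invoking Arzela--Ascoli), and your reduction of a solution of \eqref{eq1} to a fixed point of $\mathcal{X}$ is the same. The divergence, and the problem, is in your Step 3.

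You claim that $\|p\|_{\mathcal{PC}} \ge \beta$ contradicts \eqref{eq19}. Write $g(r) = \mu(r) + \sum_{i}\mathscr{M}_i + \frac{(m+1)T^w(\mathscr{A}+\mathscr{F}r)}{\Gamma(w+1)(1-\mathscr{E})}$; what you have established is $\|p\|_{\mathcal{PC}} < g(\|p\|_{\mathcal{PC}})$, and what \eqref{eq19} gives is $g(\beta) < \beta$ at the single value $\beta$. For a general nonnegative increasing $\mu$ these two facts are compatible with $\|p\|_{\mathcal{PC}} > \beta$: monotonicity of $\mu$ pushes the inequality in the wrong direction ($r \ge \beta$ gives $g(r) \ge g(\beta)$, which is no contradiction), and the set $\{r : r < g(r)\}$ need not be contained in $[0,\beta)$ --- take, say, $\mu(r)$ growing superlinearly past $\beta$. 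So your contradiction does not close, and the full Schaefer set $\Psi$ is not shown to be bounded. (Your argument does work if $\mu$ is a constant multiplier, i.e. $g$ is affine with slope $<1$, which is how the paper's notation $\mu\beta$ in \eqref{eq19} and $1-\mu$ in the proof of Theorem \ref{th2} implicitly treats it; but that is an extra hypothesis you should state.)

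The paper avoids this by proving only the weaker statement that no $p$ with $\|p\|_{\mathcal{PC}} = \beta$ can satisfy $p = \lambda\mathcal{X}(p)$ (this follows immediately: it would force $\beta \le g(\beta) < \beta$), and then applying the fixed point theorem to the restriction of $\mathcal{X}$ to the closed ball $\overline{\Psi} = \{p : \|p\|_{\mathcal{PC}} \le \beta\}$ with the no-fixed-point-on-the-boundary condition --- i.e. the Leray--Schauder nonlinear alternative rather than the literal form of Schaefer's theorem quoted in Definition \ref{d10}. To repair your proof, either adopt that route, or strengthen \eqref{eq19} to hold for all $r \ge \beta$ (equivalently assume $\mu$ sublinear), so that $\|p\|_{\mathcal{PC}} < g(\|p\|_{\mathcal{PC}})$ genuinely forces $\|p\|_{\mathcal{PC}} < \beta$.
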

\begin{proof}
The proof of the theorem is given in the following steps:\\
Step 1: The operator $\mathcal{X}: \Pi \to \Pi$ is continuous; Let $\{p_{n}\}$ be a sequence of $\mathcal{PC}(\mathcal{I}_{T}, \mathbb{R})$ such that $p_{n} \to p,$ then for each $\theta \in \mathcal{I}_{k}$ for $k = 1, 2, 3, \cdot\cdot\cdot, m$
\begin{align}
||\mathcal{X}p_{n} - \mathcal{X}q||_{\mathcal{PC}}& = \sup_{\theta \in \mathcal{I}_{k}}|(\mathcal{X}p_{n})(\theta) - (\mathcal{X}q)(\theta)|\nonumber\\
& \le \sum_{i = 1}^{m}\Big|\mathscr{I}_{i}(\theta_{i}, p_{n}(\theta_{i}^{-})) - \mathscr{I}_{i}(\theta_{i}, p(\theta_{i}^{-}))\Big| + \Big|\frac{1}{\Gamma(w)}\int_{\theta_{k}}^{\theta}\big(\theta - \zeta)\big)^{w - 1}(h_{n}(\zeta) - h(\zeta))d\zeta\Big|\nonumber\\& + \frac{1}{\Gamma(w)}\Big|\sum_{i = 1}^{m}\int_{\theta_{i - 1}}^{\theta_{i}}\big(\theta_{i} - \zeta\big)^{w - 1}(h_{n}(\zeta) - h(\zeta)d\zeta\Big| + |\phi(p_{n}) - \phi(p)|\label{eq20},
\end{align}
where $h_{n} \in \Pi,$ such that $h_{n} = \mathcal{L}(\theta, p_{n}, h_{n})$ then from the Equation (\ref{eq01}), we get 
\begin{align}
|h_{n} - h|& = |\mathcal{L}(\theta, p_{n}, h_{n}) - \mathcal{L}(\theta, p, h)|\nonumber\\
&\le \mathscr{K}|p_{n} - p| + \mathscr{G}|h_{n} - h|\nonumber\\
&\le \frac{\mathscr{K}|p_{n} - p|}{1 - \mathscr{G}}\label{eq21}.
\end{align}
If we take the norm of $\mathcal{PC}(\mathcal{I}_{T}, \mathbb{R}),$ then we get
\begin{align}\label{eq22}
||h_{n} - h||_{\mathcal{PC}} \le \frac{\mathscr{K}}{1 - \mathscr{G}}||p_{n} - p||_{\mathcal{PC}}.
\end{align}
Now using the Equation (\ref{eq22}), we have from the Equation (\ref{eq20})
\begin{align*}
||\mathcal{X}p_{n} - \mathcal{X}q||_{\mathcal{PC}} &\le ||p_{n} - p||_{\mathcal{PC}}\Big(\sum_{i = 1}^{m}\mathscr{L}_{i} + \frac{\mathscr{K}T^{w}(m + 1)}{(1 - \mathscr{G})\Gamma(w + 1)} + \mathscr{H}\Big).
\end{align*}
If $p_{n} \to p$ as $n \to \infty,$ then $||\mathcal{X}p_{n} - \mathcal{X}q||_{\mathcal{PC}} \to 0.$ Hence the operator is continuous.\\
Similarly for $\theta \in \mathcal{I}_{0},$ we can show that $||\mathcal{X}p_{n} - \mathcal{X}q||_{\mathcal{PC}} \to 0,$ as $n \to \infty.$ \\
Step 2: The operator $\mathcal{X}$ conveys equicontinuous set of $\mathcal{PC}(\mathcal{I}_{T}, \mathbb{R})$ to equicontinuous set on $\mathcal{PC}(\mathcal{I}_{T}, \mathbb{R}).$ Let $x_{1}, x_{2} \in \mathcal{I}_{k}, k =1, 2, 3, \cdot\cdot\cdot, m,$ then we get
\begin{align*}
||\mathcal{X}p(x_{2}) - \mathcal{X}p(x_{1})||_{\mathcal{PC}}& = \sup_{\theta \in \mathcal{I}_{k}}\big|\mathcal{X}p(x_{2}) - \mathcal{X}q(x_{1})\big|\\
\le& \big|\frac{1}{\Gamma(w)}\int_{\theta_{k}}^{x_{1}}\big((x_{2} - \alpha(\zeta))^{w - 1} - (x_{1} - \alpha(\zeta)^{w - 1})\big)h(\zeta)\nabla \zeta\big|\\& + \big|\frac{1}{\Gamma(w)}\int_{x_{1}}^{x_{2}}(x_{2} - \alpha(\zeta))^{w - 1}h(\zeta)\nabla \zeta\big| + \sum_{0 < \theta_{k} < x_{2} - x_{1}}\big|\mathscr{I}_{\theta_{k}}(\theta_{k}, p(\theta_{k}^{-}))\big|\\
< &\Big|\frac{1}{\Gamma(w)}\int_{\theta_{k}}^{x_{1}}\big((x_{2} - \zeta)^{w - 1} - (x_{1} - \zeta)^{w - 1}\big)h(\zeta)d\zeta\Big| + \Big|\frac{1}{\Gamma(w)}\int_{x_{1}}^{x_{2}}(x_{2} - \zeta)^{w - 1}h(\zeta)d\zeta\Big|\\& + \sum_{0 < \theta_{k} < x_{2} - x_{1}}\big|\mathscr{I}_{\theta_{k}}(\theta_{k}, p(\theta_{k}^{-}))\big|\\
\le & \frac{\mathscr{A} + \mathscr{F}\sigma}{(1 - \mathscr{E})\Gamma(w)}\Big(\Big|\int_{\theta_{k}}^{x_{1}}\big((x_{2} - \zeta)^{w - 1} - (x_{1} - \zeta)^{w - 1}\big)d\zeta\Big| + \Big|\frac{1}{\Gamma(w)}\int_{x_{1}}^{x_{2}}(x_{2} - \zeta)^{w - 1}d\zeta\Big|\Big) \\&+ \sum_{0 < \theta_{k} < x_{2} - x_{1}}\big|\mathscr{I}_{\theta_{k}}(\theta_{k}, p(\theta_{k}^{-}))\big|.
\end{align*}
Since $(x - \zeta)^{w - 1}$ is continuous, if $x_{1} \to x_{2},$ then we get
\begin{align*}
||\mathcal{X}p(x_{2}) - \mathcal{X}p(x_{1})||_{\mathcal{PC}} \to 0.
\end{align*}
Since the result at $x_{1}, x_{2} \in \mathcal{I}_{0}$ is similar, so the proof is omitted.\\
Step 3: The operators $\mathcal{X}$ maps bounded sets in $\mathcal{PC}(\mathcal{I}_{T}, \mathbb{R}).$ If we consider the set $\Pi$ as same as in Theorem \ref{th2}, then we obtain from the Equation (\ref{eq13}) that the set $\Pi$ is closed, convex and bounded of $\mathcal{PC}(\mathcal{I}_{T}, \mathbb{R}).$ Then for any $p \in \Pi,$ we get
$$||\mathcal{X}(p)|| \le \sigma,$$ which indicate the boundedness condition of the operator $\mathcal{X}.$
As a consequences of the Step 1, Step 2 and Step 3 together with the theorem of Arzela-Ascoli, we arrived that the mapping $\mathcal{X}$ is continuous completely.\\
Step 4: For any $\lambda \in (0, 1),$ the set 
$\mathcal{K} = \{p \in \mathcal{PC}(\mathcal{I}_{k}, \mathbb{R}): p = \lambda\mathcal{X}(p), 0 < \lambda < 1\}$ is bounded,  for $\theta \in \mathcal{I}_{k}, k = 1, 2, 3, \cdot\cdot\cdot, m,$ we have 
\begin{align*}
|p(\theta)|& = |\lambda\mathcal{X}(p)\theta|\\
&= \Big|\lambda\Big(\phi(p) + \frac{1}{\Gamma(w)}\sum_{i = 1}^{k}\int_{\theta_{i - 1}}^{\theta_{i}}\big(\theta_{i} - \alpha(\zeta)\big)^{w - 1}h(\zeta)\nabla\zeta\\& + \frac{1}{\Gamma(w)}\int_{\theta_{k}}^{\theta}\big(\theta - \alpha(\zeta)\big)^{w - 1}h(\zeta)\nabla \zeta + \sum_{i = 1}^{k}\mathscr{I}_{i}(\theta_{i}, p(\theta_{i}^{-}))\Big)\Big|\\
& \le \mu||p||_{\mathcal{PC}} + \sum_{i = 1}^{n}\mathscr{M}_{i} + \frac{(\mathscr{A} + \mathscr{F}||p||_{\mathcal{PC}})T^{w}(m + 1)}{\Gamma(w + 1)(1 - \mathscr{E})}.
\end{align*}
Thus,
\begin{align*}
\frac{||p||_{\mathcal{PC}}}{\mu||p||_{\mathcal{PC}} + \sum_{i = 1}^{n}\mathscr{M}_{i} + \frac{(\mathscr{A} + \mathscr{F}||p||_{\mathcal{PC}})T^{w}(m + 1)}{(1 - \mathscr{E})\Gamma(w + 1)}} \le 1.
\end{align*} 
From the Equation (\ref{eq19}), we get a positive constant $\beta$ such that $||p||_{\mathcal{PC}} \neq \beta.$ Consider a set $\Psi = \{p \in \mathcal{PC}(\mathcal{I}_{T}, \mathbb{R}): ||p||_{\mathcal{PC}} < \beta\}$. Then the operator $\mathcal{X}: \overline{\Psi} \to \mathcal{PC}(\mathcal{I}_{T}, \mathbb{R})$ is continuous and completely continuous. So there is no $p \in \partial(\Psi)$ such that $p = \lambda\mathcal{X}(p), \lambda \in (0, 1).$ Thus, from the Scheafer's fixed point theorem we get that the operator $\mathcal{X}$ has a fixed point, which is the solution of the Equation (\ref{eq1}).\\
\indent The result for $\theta \in \mathcal{I}_{0}$ is almost same, so it is omitted.
\end{proof}

\section{Example}
\begin{exm}
Consider a impulsive fractional dynamic equation involving a non-local initial condition 
\begin{equation}\label{eq23}
\begin{cases} 
^{C}\mathcal{D}^{\frac{1}{2}}p(\theta) = \frac{e^{-3\theta}\big(2 + |p(\theta)| + |^{C}\mathcal{D}^{w}p(\theta)|\big)}{35e^{2\theta}\big(1 + |p(\theta)| + |^{C}\mathcal{D}^{w}p(\theta)|\big)}, &\theta = [0, 1]_{T}, \theta \neq \frac{1}{3}\\
p(\theta_{1}^{+}) - p(\theta_{1}^{-}) = \frac{1 + \theta_{1}^{-}e^{p(\theta_{1}^{-})}}{10}, &\theta_{1} = \frac{1}{3}\\
p(0) = \frac{1 + e^{p}}{5}. 
\end{cases}
\end{equation}
We set 
\begin{align*}
\mathcal{L}(\theta, p, q)& = \frac{e^{-3\theta}\big(2 + |p(\theta)| + |q(\theta)|)}{35e^{2\theta}\big(1 + |p(\theta)| + |q(\theta)|)}\\
\mathscr{I}_{1}(\theta, p)& = \frac{1 + \theta e^{p(\theta)}}{10}, \theta = \theta_{1}, p \in \mathbb{R}\\
\phi(p) &= \frac{1 + e^{p}}{5},
\end{align*}
then for all $\theta \in [0, 1]_{T}$ and  $p, q, h, g \in \mathbb{R},$ we get
\begin{align*}
|\mathcal{L}(\theta, p, q)| \le \frac{2 + |p(\theta)| + |q(\theta)|}{35e^{2\theta}},
\end{align*}and 
\begin{align*}
|\mathcal{L}(\theta, p, q) - \mathcal{L}(\theta, h, g)| \le \frac{1}{35e^{3}}|p - h| + \frac{1}{35e^{3}}|q - g|,\\
|\mathscr{I}_{1}(\theta, p) - \mathscr{I}_{1}(\theta, q)| \le \frac{1}{10}|p - h|, |\phi(p) - \phi(h)| \le \frac{1}{5}|p - h|, |\phi(p)| \le \frac{2}{5}.
\end{align*}
From here we get $\mathscr{K} = \frac{1}{35e^{3}}, \mathscr{G} = \frac{1}{35e^{3}}, \mathscr{L}_{1} =\frac{1}{10}, \mathscr{H} = \frac{2}{5},$ therefore  the conditions of $\mathcal{A}_{1} - \mathcal{A}_{4}$ are satisfied, thus for $m = 1$, we get
\begin{align*}
\mathscr{L}_{1} + \frac{\mathscr{K}T^{w}(p + 1)}{(1 - \mathscr{L})\Gamma(w + 1)} + \mathscr{H} \le& \frac{1}{10} + \frac{2}{5} + \frac{2\frac{1}{35e^{3}}}{(1 - \frac{1}{35e^{3}})\Gamma(\frac{1}{2} + 1)}\\
\le & \frac{1}{2} + \frac{2\frac{1}{35e^{3}}}{(1 - \frac{1}{35e^{3}})\Gamma(\frac{1}{2} + 1)}\\ \le &1.
\end{align*}
Thus, the conditions of the Theorem \ref{th2} are satisfied. Therefore we can say that the Equation (\ref{eq23}) has a solution. 
\end{exm}

\section{Conclusion}
In this manuscript we discussed the Caputo nabla derivative and Riemann-Liouville nabla derivative and compare both the operators in the time scale context. Also we discussed fractional dynamic equation by the Caputo nabla derivative involving instantaneous impulses with non local initial condition. The discussion of the stability of the solution of the Equation (\ref{eq1}) will be our future work. The theory of impulsive fractional dynamic equation has a potential application on the filed of mathematical analysis, moreover it has a wide application in physics, field of engineering, economics, etc.\\

\noindent {\bf Funding:} This study was not funded by any agencies. \\
{\bf Competing interests:}
The authors declare that they have no competing interests.\\
{\bf Authors’ contributions:}
Each of the authors contributed to each part of this work equally and read and approved the final version of the manuscript.\\
{\bf Ethical approval:}
This article does not contain any studies with human participants or animals performed by any of the authors.

\end{document}